\journalname{Japan J.\ Indust.\ Appl.\ Math.}
\newcommand{\Ref}[1]{{\rm (\ref{#1})}}
\newcommand{\R}{\mathbb R}
\newcommand{\dd}{\mathrm{d}}
\newcommand{\bfx}{{\bf x}}
\newcommand{\hK}{\widehat{K}}
\newcommand{\Fab}{F_{\alpha,\beta}}
\newcommand{\Kab}{K_{\alpha,\beta}}
\newcommand{\Kst}{K_{s,t}}
\newcommand{\KK}{K_{s,t}^\eta}
\newcommand{\T}{\mathcal{T}}
\newcommand{\Ih}{{I}_h}
\newcommand{\edit}[1]{{\color{black}#1}}
\begin{document}

\title{A Babu\v{s}ka-Aziz type proof of the circumradius condition
}


\author{Kenta Kobayashi \and Takuya Tsuchiya}


\institute{Kenta Kobayashi \at
           Graduate School of Commerce and Management, \\
         Hitotsubashi University, Japan \\
           \email{kenta.k@r.hit-u.ac.jp} \and
           Takuya Tsuchiya \at
           Graduate School of Science and Engineering, \\
           Ehime Unive\edit{r}sity, Japan \\
           \email{tsuchiya@math.sci.ehime-u.ac.jp}
}

\date{Received: date / Accepted: date}

\maketitle

\begin{abstract}
In this paper the error of polynomial interpolation of degree $1$
on triangles is considered.  The circumradius condition, which is
more general than the maximum angle condition, is explained and
proved by the technique given by Babu\v{s}ka-Aziz.
\keywords{interpolation error \and \edit{finite element methods} \and
\edit{the maximum angle condition} \and the circumradius condition}
\subclass{65D05, 65N30}
\end{abstract}

\section{Introduction --- the circumradius condition}
\label{intro}
Let $\mathcal{P}_1$ be the set of polynomials whose degree are
at most $1$.  Let $K \subset \R^2$ be any triangle with apexes
$\bfx_i$ $i=1,2,3$.  Then, for a function $v \in W^{2,p}(K)$ the
$\mathcal{P}_1$ interpolation $\Ih v$ on $K$ is defined by
$(\Ih v)(\bfx_i) = v(\bfx_i)$.  Note that the interpolation $\Ih v$
is well-defined for $v \in W^{2,p}(K)$ since $W^{2,p}(K)$ is imbedded
to $C(\overline{K})$ for any $p \in [1,\infty]$.\footnote{For
the critical imbedding $W^{2,1}(K) \subset C(\overline{K})$, see
\cite[p.300]{KJF}.}
 Analyzing the error
\[
    \|v - \Ih v\|_{1,p,K}
\]
is \edit{particularly} important for the error analysis of
finite element methods. 
\edit{There is a long history of research into
this error bound}.  \edit{We present some well-known results}.
Let $h_K$ be the diameter (or the length of the longest edge) of $K$,
and $\rho_K$ be the radius of the inscribed circle of $K$. 
\begin{itemize}
\item \textbf{The minimum angle condition}, Zl\'amal \cite{Z} (1968). \\
{\it Let $\theta_0$, $0 < \theta_0 < \pi/3$ be a constant. If
any angle $\theta$ of $K$ satisfies $\theta \ge \theta_0$
and $h_K \le 1$, then there exists a constant $C = C(\theta_0)$
independent of $h_K$ such that}
\[
   \|v - \Ih v\|_{1,2,K} \le C h_K |v|_{2,2,K}, \qquad
   \forall v \in H^2(K).
\]
 \item \textbf{The regularity condition},
 see, for example, Ciarlet \cite{C}. \\
{\it Let $\sigma > 0$ be a constant. If $h_K/\rho_K \le \sigma$ and
$h_K \le 1$, then there exists a constant $C = C(\sigma)$ independent
of $h_K$ such that}
\[
   \|v - \Ih v\|_{1,2,K} \le C h_K |v|_{2,2,K}, \qquad
   \forall v \in H^2(K).
\]
\item \textbf{The maximum angle condition},
 Babu\v{s}ka-Aziz \cite{BA}, Jamet \cite{J} (1976). \\
{\it Let $\theta_1$, $2\pi/3 \le \theta_1 < \pi$ be a constant.  If
any angle $\theta$ of $K$ satisfies $\theta \le \theta_1$ and
$h_K \le 1$, then there exists a constant $C = C(\theta_1)$ independent
of $h_K$ such that}
\[
   \|v - \Ih v\|_{1,2,K} \le C h_K |v|_{2,2,K}, \qquad
   \forall v \in H^2(K).
\]
\end{itemize}
It is easy to show that the minimum angle condition is equivalent
to the regularity condition \cite[Exercise~3.1.3, p130]{C}.
 Liu and Kikuchi presented an explicit form of the constant $C$
in \cite{LK}.

\edit{Inspired} by Liu-Kikuchi's result, Kobayashi
obtain\edit{ed} the following epoch-making result \cite{K1}, \cite{K2}.
Let $A$, $B$ \edit{and} $C$ be the lengths of the three edges of $K$ and
$S$ be the area of $K$.
\begin{itemize}
\item \textbf{Kobayashi's formula}, Kobayashi \cite{K1}, \cite{K2} \\
{\it Define the constant $C(K)$ by}
\[
   C(K) := \sqrt{\frac{A^2B^2C^2}{16S^2} - 
                    \frac{A^2 + B^2 + C^2}{30} - \frac{S^2}5
       \left(\frac1{A^2} + \frac1{B^2} + \frac1{C^2}\right)},
\]
{\it then the following holds}:
\[
   |v - \Ih v|_{1,2,K} \le C(K) |v|_{2,2,K}, \qquad
   \forall v \in H^2(K).
\]
\end{itemize}
Let $R_K$ be the radius of the circumcircle of $K$.  \edit{Using} the
formula $R_K = ABC/4S$, we \edit{can show that} $C(K) < R_K$ and obtain
a corollary of Kobayashi's formula.
\begin{itemize}
\item \textbf{A corollary of Kobayashi's formula} \\
{\it For any triangle $K \subset \R^2$, the following estimate holds}:
\[
   |v - \Ih v|_{1,2,K} \le R_K |v|_{2,2,K}, \qquad
   \forall v \in H^2(K).
\]
\end{itemize}
This corollary demonstrates that even if the
minimum angle is very small or the maximum angle is very close to $\pi$,
the error $|v - \Ih v|_{1,K}$ converges to $0$ if $R_K$ converges to $0$.
For example, consider \edit{the} isosceles triangle $K$ depicted in
Figure~1.  If $ 0 < h < 1$ and $\alpha > 1$, then $h^\alpha < h$ and the
circumradius of $K$ is $h^\alpha/2 + h^{2-\alpha}/8$.  Hence, if
$1 < \alpha < 2$ and $|v|_{2,2,K}$ is bounded, the error
$|v - \Ih v|_{1,2,K}$ converges to $0$ \edit{even though} the maximum
angle is \edit{tending} to $\pi$ as
$h\to 0$, although \edit{the} convergence rate becomes inferior. 

\begin{figure}[thb]
\begin{center}
  \psfrag{h}[][]{$h$}
  \psfrag{k}[][]{$h^\alpha$}
  \includegraphics[width=5cm]{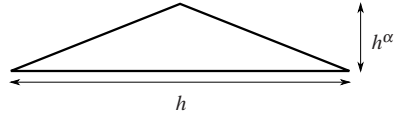} \\[0.0cm]
\caption{An example of \edit{a triangle} which violates the maximum
angle condition but \edit{satisfies} $R_K \to 0$ as $h\to 0$.}
\end{center}
\end{figure}

Suppose that $\{\tau_h\}_{h>0}$ is a series of triangulation\edit{s}
of a convex polygonal domain $\Omega\subset\R^2$ such that
\begin{equation}
  \lim_{h\to 0} \max_{K \in \tau_h} R_K =0.
  \label{circumradius-condition}
\end{equation}
Let $S_{\tau_h}$ be the set of all piecewise linear functions
on $\tau_h$\edit{,} defined by
\[
   S_{\tau_h} := \left\{v_h \in H_0^1(\Omega) \cap C(\overline{K}) \bigm|
    v_h|_{K} \in \mathcal{P}_1, \forall K \in \tau_h  \right\}.
\]
Let $u_h$ be the piecewise linear finite element solution 
on the triangulation $\tau_h$ of the Poisson problem
\[
    - \Delta u = f \text{ in } \Omega, \qquad
  u = 0 \text{ on }  \partial\Omega
\]
for a given $f \in L^2(\Omega)$.  Then, the
well-known C\'ea's lemma \cite[Theorem~2.4.1]{C} claims that,
for the exact solution $u$,
\begin{align*}
    \|u - u_h\|_{1,2,\Omega} & \le 
   \edit{\left(1 + C_P^2\right)^{1/2}|u - u_h|_{1,2,\Omega} \le}
  \left(1 + C_P^2\right)^{1/2}
   \inf_{v_h \in S_{\tau_h}}
   \edit{|} u - v_h \edit{|}_{1,2,\Omega} \\
  & \le \left(1 + C_P^2\right)^{1/2}
   \edit{|} u - \Ih u\edit{|}_{1,2,\Omega}
  \le \left(1 + C_P^2\right) \edit{^{1/2}}
   \left(\max_{K \in \tau_h} R_K\right) |u|_{2,2,\Omega},
\end{align*}
where $C_P$ is the Poincar\'e constant for $\Omega$.\footnote{
Poincar\'e's inequality claims that there exists a constant
$C_P > 0$ which is called the Poincar\'e constant
such that $|v|_{0,2,\Omega} \le C_{P}|v|_{1,2,\Omega}$ for
$v\in H_0^1(\Omega)$. See \cite{KJF}.}
 Thus, \textit{the discretization error} $\|u - u_h\|_{1,2,\Omega}$ is
bounded by \textit{the interpolation error} $\|u - \Ih u\|_{1,2,\Omega}$,
and the finite element solutions $\{u_h\}$ converge to $u$ even if the
maximum angle condition is violated (see the example of triangulation in
Fig.~2).  Therefore, to obtain the error estimate of $\Ih v$ or to
\edit{ensure that the} finite element solutions converge to \edit{the}
exact solution, $\max_{K\in \tau_h}R_k$ is more
important than the minimum \edit{or} maximum angle\edit{s}.

\begin{figure}[thb]
\begin{center}
  \includegraphics[width=9cm]{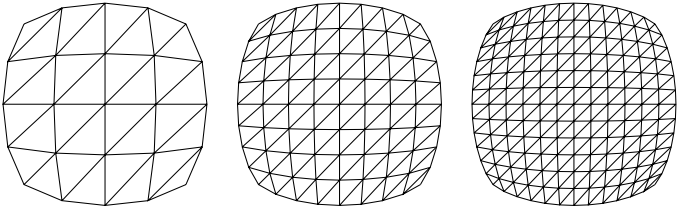} \\[0.5cm]
 \caption{A series of triangulation\edit{s} of the domain
   $|x-y|^{3/2}+|x+y|^{3/2} < 2$\edit{,} which satisf\edit{y} 
the circumradius condition.
The maximum angle condition is clearly violated.}
\end{center}
\end{figure}

A drawback of Kobayashi's formula is that its proof is very long
and needs \edit{the} assistance of validated numerical computation\edit{s}.
\edit{However,} in many cases the following estimation is good enough for
\edit{the} error analysis of finite element methods:
\begin{itemize}
\item \textbf{The circumradius condition}  \\
\textit{For an arbitrary triangle $K$ with $R_K \le 1$, there
exists a constant $C_p$ independent of $K$
such that the following estimate holds}: 
\[
   \|v - \Ih v\|_{1,p,K} \le C_p R_K |v|_{2,p,K}, \qquad
   \forall v \in W^{2,p}(K), \quad 1 \le p \le \infty.
\]
\end{itemize}
This estimation and/or the condition \Ref{circumradius-condition} are
called \textbf{the circumradius condition}.

The purpose of this paper is to \edit{prove} the circumradius
condition \edit{without using} validated numerical computation\edit{s}.
The main tool of our proof is the orthogonal expansion-contraction
transformation $F_{\alpha,\beta}:K \to \R^2$ defined by
$\Fab(x,y) := (\alpha x, \beta y)$ for $(x,y) \in K$ with
$\alpha, \beta > 0$.  Applying the orthogonal expansion-contraction
transformations twice, any triangle $K$ with circumradius $R$ becomes similar  
to the reference triangle $\hK$ whose apexes are $(0,0)$, $(1,0)$
and $(0,1)$.  Then, \edit{we estimate} the ratio of $|v|_{2,p,K}$ and
$|v|_{1,p,K}$
for $v \in W^{2,p}(K)$ \edit{using} a technique given by
Babu\v{s}ka-Aziz.  See the proof of Lemma~\ref{L3}.
In this sense, this paper is an extension of \cite{BA}.

\section{Preliminary and basic lemmas}
Let $K \subset \R^2$ be any triangle.
Partial derivatives of a function $u$ with respect to 
$x$, $y$ are denoted by
\[
   u_{x} := \frac{\partial u}{\partial x}, \qquad
   u_{xx} := \frac{\partial^2 u}{\partial x^2}, \quad
   u_{xy} := \frac{\partial^2 u}{\partial x\partial y}, \quad
  \mathrm{etc.} \qquad  (x,y) \in K.
\]
The usual Lebesgue and  Sobolev spaces on $K$
are denoted by $L^p(K)$, $W^{k,p}(K)$, $k=1,2$,
$p \in [1,\infty]$. As usual, $W^{k,2}(K)$ are denoted by
$H^k(K)$.  We denote their norms and semi-norms by
$|u|_{0,p,K}^p := \int_K |u|^p \dd \bfx$ for $p \in [1,\infty)$,
$|u|_{0,\infty,K} := \mathrm{ess} \sup_{K}|u|$ and
\begin{align*}
  &  |u|_{1,p,K}^p := |u_x|_{0,p,K}^p + |u_y|_{0,p,K}^p, \quad
    \|u\|_{1,p,K}^p := |u|_{0,p,K}^p + |u|_{1,p,K}^p, \\
  &  |u|_{2,p,K}^p := |u_{xx}|_{0,p,K}^p + |u_{yy}|_{0,p,K}^p
         + 2 |u_{xy}|_{0,p,K}^p, \\
  & |u|_{1,\infty,K} := \max\{
           |u_x|_{0,\infty,K}, |u_y|_{0,\infty,K}\},
    \|u\|_{1,\infty,K} := \max\{|u|_{0,\infty,K}, |u|_{1,\infty,K}\}\edit{,} \\
  & |u|_{2,\infty,K} := \max\{
           |u_{xx}|_{0,\infty,K}, |u_{yy}|_{0,\infty,K},
           |u_{xy}|_{0,\infty,K}\}.
\end{align*}

Throughout this paper and \cite{BA}, the most important tool is
the \textbf{orthogonal expansion-contraction} 
(OEC) transformation $F_{\alpha,\beta}:K \to \R^2$ defined,
for $\alpha, \beta > 0$, \edit{as}
\[
   \Fab(x,y) := (\alpha x, \beta y), \qquad
   (x,y) \in K.
\]
Define $\Kab:= \Fab(K)$ and take \edit{the}
arbitrary function $v \in W^{2,p}(\Kab)$.  Then, defining
$u := v \circ \Fab \in W^{2,p}(K)$, we have, for $p \in [1,\infty)$,
\begin{gather}
  \frac{|v|_{1,p,\Kab}^p}{|v|_{0,p,\Kab}^p}
    = \frac{\beta^p |u_{x}|_{0,p,K}^p
                + \alpha^p |u_{y}|_{0,p,K}^p}
        {\alpha^p\beta^p |u|_{0,p,K}^p},  \label{trans1} \\
  \frac{|v|_{2,p,\Kab}^p}{|v|_{0,p,\Kab}^p}
    = \frac{\frac{\beta^p}{\alpha^p} |u_{xx}|_{0,p,K}^p
                + \frac{\alpha^p}{\beta^p} |u_{yy}|_{0,p,K}^p
                  + 2 |u_{xy}|_{0,p,K}^p}
          {\alpha^p\beta^p |u|_{0,p,K}^p},  \label{trans2} \\
  \frac{|v|_{2,p,\Kab}^p}{|v|_{1,p,\Kab}^p}
    = \frac{\frac{\beta^p}{\alpha^p} |u_{xx}|_{0,p,K}^p
                + \frac{\alpha^p}{\beta^p} |u_{yy}|_{0,p,K}^p
                  + 2 |u_{xy}|_{0,p,K}^p}
        {\beta^p |u_x|_{0,p,K}^p + \alpha^p |u_y|_{0,p,K}^p}
     \label{trans3}
\end{gather}
and
\begin{gather}
  \frac{|v|_{1,\infty,\Kab}}{|v|_{0,\infty,\Kab}}
    = \frac{\max \left\{ \beta|u_{x}|_{0,\infty,K},
                 \alpha |u_{y}|_{0,\infty,K} \right\}}
        {\alpha\beta |u|_{0,\infty,K}},  \notag \\
  \frac{|v|_{2,\infty,\Kab}}{|v|_{0,\infty,\Kab}}
    = \frac{\max \left\{\frac{\beta}{\alpha} |u_{xx}|_{0,\infty,K},
                \frac{\alpha}{\beta} |u_{yy}|_{0,\infty,K},
                 |u_{xy}|_{0,\infty ,K} \right\} }
          {\alpha\beta |u|_{0,\infty,K}},  \notag \\
  \frac{|v|_{2,\infty,\Kab}}{|v|_{1,\infty,\Kab}}
    = \frac{ \max \left\{  \frac{\beta}{\alpha} |u_{xx}|_{0,\infty,K},
                \frac{\alpha}{\beta} |u_{yy}|_{0,\infty,K},
                |u_{xy}|_{0,\infty,K}  \right\}}
        {\max \left\{\beta |u_x|_{0,\infty,K},
           \alpha |u_y|_{0,\infty,K} \right\} }.
     \label{trans3-inf}
\end{gather}

Let $\hK$ be the reference triangle whose apexes are
$(0,0)$, $(1,0)$, $(0,1)$.  Take $\alpha$, $\beta$ so that
\begin{equation}
 \alpha^2 + \beta^2 = 2, \qquad
  0 < \beta \le 1 \le \alpha < \sqrt{2},
\end{equation}
and define $\Kab := \Fab(\hK)$.  Note that $\hK = K_{1,1}$
and the circumradii of $\hK$ and $\Kab$ are $1/\sqrt{2}$.

For $p \in [1,\infty]$, 
define $\Xi_{p}^1(\Kab)$, $\Xi_{p}^2(\Kab)$, $\T_p(\Kab)$ by
\begin{gather*}
   \Xi_{p}^1(\Kab) := 
  \left\{v \in W^{1,p}(\Kab) \Bigm|
      \int_0^\alpha v(x,0) \dd x = 0 \right\}, \\
   \Xi_{p}^2(\Kab) := 
  \left\{v \in W^{1,p}(\Kab) \Bigm|
        \int_0^\beta v(0,y) \dd y = 0 \right\}, \\
  \T_p(\Kab) := 
    \left\{v \in W^{2,p}(\Kab) \Bigm|
     v(0,0) = v(\alpha,0) = v(0,\beta) = 0 \right\}.
\end{gather*}

The following lemma is an extension of 
\cite[Lemma~2.1]{BA} to any $p \in [1,\infty]$.
Although the proof is very similar, we \edit{include} it here 
for \edit{the} reader\edit{s'} convenience.

\begin{lemma} \label{L1} For $p \in [1,\infty]$,
define the constants $A_{p1}$, $A_{p2}$ by
\[
  A_{p1} := \inf_{w \in \Xi_{p}^1(\hK)} 
  \frac{|w|_{1,p,\hK}}{|w|_{0,p,\hK}}, \quad
  A_{p2} := \inf_{z \in \Xi_{p}^2(\hK)} 
  \frac{|z|_{1,p,\hK}}{|z|_{0,p,\hK}}.
\]
Then, we have $A_p := A_{p1} = A_{p2} > 0$.
\end{lemma}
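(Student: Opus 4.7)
The plan is to first establish $A_{p1} = A_{p2}$ by exploiting the symmetry of $\hK$ under the reflection $R(x,y) := (y,x)$, and then to prove that the common value $A_p$ is strictly positive via a standard compactness-and-contradiction argument.

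For the equality, given $w \in \Xi_p^1(\hK)$ I would set $z(x,y) := w(y,x)$. Since $R$ maps $\hK$ onto itself, $z \in W^{1,p}(\hK)$, and a change of variables yields $|z|_{0,p,\hK} = |w|_{0,p,\hK}$ together with $|z|_{1,p,\hK} = |w|_{1,p,\hK}$ (the two partials simply swap under $R$). The trace identity $z(0,y) = w(y,0)$ converts $\int_0^1 w(x,0)\,\dd x = 0$ into $\int_0^1 z(0,y)\,\dd y = 0$, so $w \mapsto z$ gives a norm-preserving bijection between $\Xi_p^1(\hK)$ and $\Xi_p^2(\hK)$ that preserves the ratio $|\cdot|_{1,p,\hK}/|\cdot|_{0,p,\hK}$. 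Hence $A_{p1} = A_{p2}$.

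To prove $A_p > 0$, I would argue by contradiction and pick a sequence $\{w_n\} \subset \Xi_p^1(\hK)$ with $|w_n|_{0,p,\hK} = 1$ and $|w_n|_{1,p,\hK} \to 0$. For $1 \le p < \infty$, the Rellich-Kondrachov theorem yields a (non-relabelled) subsequence converging in $L^p(\hK)$ to some $w^{*}$ with $|w^{*}|_{0,p,\hK} = 1$; since the gradients converge to $0$ in $L^p$, the distributional gradient of $w^{*}$ vanishes, so $w^{*}$ is a nonzero constant. For $p = \infty$, I would instead apply Arzel\`a-Ascoli via the embedding $W^{1,\infty}(\hK) \hookrightarrow C^{0,1}(\overline{\hK})$: the sequence is uniformly bounded and uniformly Lipschitz with Lipschitz constants tending to $0$, so a uniformly convergent subsequence has a constant limit $w^{*}$ still satisfying $|w^{*}|_{0,\infty,\hK} = 1$. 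Continuity of the trace on the bottom edge then lets the linear functional $w \mapsto \int_0^1 w(x,0)\,\dd x$ pass to the limit, forcing $\int_0^1 w^{*}(x,0)\,\dd x = 0$, which contradicts $w^{*}$ being a nonzero constant.

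The one step I expect to require some care is the endpoint $p = \infty$, where Rellich-Kondrachov is unavailable: one must verify via Arzel\`a-Ascoli that both the normalization $|w^{*}|_{0,\infty,\hK} = 1$ and the boundary integral constraint survive under uniform rather than $L^p$ convergence, and that the limit inherits the vanishing Lipschitz constant through a limsup bound on difference quotients. Beyond this endpoint subtlety, the argument is the standard Poincar\'e-type compactness proof.
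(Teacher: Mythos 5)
Your proof is correct, and its skeleton (normalized minimizing sequence, compactness to extract a constant limit, trace to the bottom edge to kill the constant) matches the paper's; the difference lies in which compactness result carries the argument. The paper invokes the quotient-norm estimate $\inf_{q\in\mathcal{P}_0}\|v+q\|_{1,p,\hK}\le C(\hK,p)|v|_{1,p,\hK}$ from Ciarlet's Theorem~3.1.1 to produce constants $q_k$ with $\|w_k+q_k\|_{1,p,\hK}\to 0$, then uses boundedness of $\{q_k\}\subset\R$ to pass to a convergent subsequence; you instead apply Rellich--Kondrachov (and Arzel\`a--Ascoli at $p=\infty$) directly to $\{w_n\}$. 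Your route is more self-contained, since the Deny--Lions-type lemma the paper cites is itself usually proved by exactly the compactness argument you give; the paper's route outsources that work to a standard reference and treats all $p$ uniformly, whereas you must split off the endpoint $p=\infty$. One small point worth making explicit in your write-up: Rellich--Kondrachov only gives $w_n\to w^{*}$ in $L^p(\hK)$, while the continuity of the trace that you invoke to pass $\int_0^1 w_n(x,0)\,\dd x=0$ to the limit requires convergence in $W^{1,p}(\hK)$. This does hold --- since $\nabla w^{*}=0$ and $|\nabla w_n|_{0,p,\hK}\to 0$, the $L^p$ convergence upgrades to $W^{1,p}$-norm convergence --- but the upgrade is the step that licenses the trace argument and should be stated rather than left implicit.
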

\begin{proof}
The equality $A_{p1}=A_{p2}$ is clear from the symmetry of $\hK$.
The proof of $A_{p1} > 0$ is by contradiction. Suppose that $A_{p1} = 0$.
Then\edit{,} there exists $\{w_k\}_{k=1}^\infty \subset \Xi_{p}^1$ such that
\[
   |w_k|_{0,p,\hK} = 1, \qquad
   \lim_{k \to \infty} |w_k|_{1,p,\hK} = 0.
\]
Let $\mathcal{P}_0 = \R$ be the set of polynomials of degree $0$.
By \cite[Theorem~3.1.1]{C}, there is a constant $C(\hK,p)$ such that
\begin{align*}
  \inf_{q \in \mathcal{P}_0} \|v + q\|_{1,p,\hK} & \le 
  C(\hK,p) |v|_{1,p,\hK}, \qquad \forall v \in W^{1,p}(\hK).
\end{align*}
Therefore, there exists
$\{q_k\}\subset \mathcal{P}_0$ such that
\begin{gather*}
   \inf_{q \in \mathcal{P}_0}
  \|w_k + q\|_{1,p,\hK} 
   \le \|w_k + q_k\|_{1,p,\hK}
   \le \inf_{q \in \mathcal{P}_0}
  \|w_k + q\|_{1,p,\hK} + \frac{1}{k}, \\
  \lim_{k \to \infty} \|w_k + q_k\|_{1,p,\hK} \le
  \lim_{k \to \infty}
  \left(C(\hK,p)|w_k|_{1,p,\hK} + \frac{1}k \right)= 0. 
\end{gather*}
Since the sequence $\{w_k\}\subset W^{1,p}(\hK)$ is bounded,
$\{q_k\} \subset \mathcal{P}_0 = \R$ is \edit{also} bound\edit{ed}.
Thus, there exists a subsequence $\{q_{k_i}\}$ \edit{such} that
$q_{k_i}$ converges to $\bar{q} \in \mathcal{P}_0$.
In particular, we have
\[
   \lim_{k_i \to \infty} \|w_{k_i} + \bar{q}\|_{1,p,\hK} = 0.
\]
Let $\gamma : W^{1,p}(\hK) \to W^{1-1/p,p}(\Gamma)$ be the trace
operator, where $\Gamma$ is the edge of $\hK$ connecting $(1,0)$ and
$(0,0)$.  The boundedness of $\gamma$ and the inclusion
$W^{1-1/p,p}(\Gamma) \subset L^1(\Gamma)$ yield
\[
 0 = \lim_{k \to \infty} \int_\Gamma \gamma(w_{k_i} + \bar{q}) \dd s
  = \int_\Gamma \bar{q} \dd s,
\]
since $w_{k_i} \in \Xi_{p}^1$.
Hence, we conclude that $\bar{q} = 0$ and 
$\lim_{k_i \to \infty}\|w_{k_i}\|_{1,p,\hK} = 0$, which
contradicts $\lim_{k_i \to \infty}|w_{k_i}|_{0,p,\hK}=1$. 
\footnote{For the trace operator, see, for example, \cite{KJF}.
If $p = \infty$, the boundedness of $\gamma$ is obvious since
$W^{1,\infty}(\Omega) = C^{0,1}(\overline{\Omega})$
and $W^{1,\infty}(\Gamma) = C^{0,1}(\overline{\Gamma})$.}  \qed
\end{proof}

\begin{remark}
The constant $1/A_2$ is called the \textbf{Babu\v{s}ka-Aziz
constant}.  According to \cite[pp40-41]{LK}, the Babu\v{s}ka-Aziz
constant $1/A_2$ is the maximum positive solution of the equation
$1/x + \tan(1/x) = 0$ and its approximated value is
$1/A_2 \approx 0.49291$.
\end{remark}

\begin{lemma}\label{L2}
Define the constants $A_{p1}(\Kab)$, $A_{p2}(\Kab)$ by
\[
  A_{p1}(\Kab) := \inf_{w \in \Xi_{p}^1(\Kab)} 
  \frac{|w|_{1,p,\Kab}}{|w|_{0,p,\Kab}}, \;
  A_{p2}(\Kab) := \inf_{z \in \Xi_{p}^2(\Kab)} 
  \frac{|z|_{1,p,\Kab}}{|z|_{0,p,\Kab}}.
\]
\edit{Then} $A_{p1}(\Kab) \ge A_p/\sqrt{2}$,
$A_{p2}(\Kab) \ge  A_p/\sqrt{2}$.
\end{lemma}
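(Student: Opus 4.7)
The plan is to pull back to the reference triangle $\hK$ via the OEC transformation and invoke Lemma~\ref{L1}. For $v \in \Xi_p^1(\Kab)$, set $u := v \circ \Fab \in W^{1,p}(\hK)$. Performing the change of variables $x = \alpha\xi$ in the defining integral,
\[
 0 = \int_0^\alpha v(x,0)\,\dd x = \alpha \int_0^1 u(\xi,0)\,\dd \xi,
\]
so $u \in \Xi_p^1(\hK)$ and Lemma~\ref{L1} gives $|u|_{1,p,\hK} \ge A_p |u|_{0,p,\hK}$.

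For $p \in [1,\infty)$, apply identity \Ref{trans1} with the roles of $K$ and $\Kab$ played by $\hK$ and $\Fab(\hK)$. Because $0 < \beta \le \alpha < \sqrt{2}$ the numerator is bounded below by
\[
 \beta^p |u_x|_{0,p,\hK}^p + \alpha^p |u_y|_{0,p,\hK}^p
   \ge \beta^p\bigl(|u_x|_{0,p,\hK}^p + |u_y|_{0,p,\hK}^p\bigr)
   = \beta^p |u|_{1,p,\hK}^p,
\]
and since $\alpha^2 \le 2$ we obtain
\[
 \frac{|v|_{1,p,\Kab}^p}{|v|_{0,p,\Kab}^p}
  \ge \frac{\beta^p |u|_{1,p,\hK}^p}{\alpha^p\beta^p |u|_{0,p,\hK}^p}
  = \frac{1}{\alpha^p}\,\frac{|u|_{1,p,\hK}^p}{|u|_{0,p,\hK}^p}
  \ge \frac{A_p^p}{\alpha^p} \ge \frac{A_p^p}{2^{p/2}},
\]
which yields $A_{p1}(\Kab) \ge A_p/\sqrt{2}$. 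The case $p = \infty$ is handled in exactly the same way using the first of the sup-norm identities, together with the elementary bound $\max\{\beta|u_x|_{0,\infty,\hK},\alpha|u_y|_{0,\infty,\hK}\} \ge \beta \max\{|u_x|_{0,\infty,\hK},|u_y|_{0,\infty,\hK}\}$.

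The proof of the second inequality is entirely analogous: for $v \in \Xi_p^2(\Kab)$ the substitution $y = \beta\eta$ turns $\int_0^\beta v(0,y)\,\dd y = 0$ into $\int_0^1 u(0,\eta)\,\dd \eta = 0$, so $u \in \Xi_p^2(\hK)$, and the same chain of estimates delivers $A_{p2}(\Kab) \ge A_p/\sqrt{2}$. There is no real obstacle; the entire argument is a change of variables plus careful bookkeeping of which power of $\alpha$ and $\beta$ dominates under the normalization $\alpha^2 + \beta^2 = 2$, $\beta \le \alpha$.
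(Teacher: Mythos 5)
Your proof is correct and follows essentially the same route as the paper: pull back to $\hK$ via $\Fab$, use the identity \Ref{trans1} together with $\beta \le \alpha \le \sqrt{2}$ to reduce to the quotient $|u|_{1,p,\hK}/|u|_{0,p,\hK}$, and invoke Lemma~\ref{L1}. Your explicit verification that the pullback $u$ lies in $\Xi_p^1(\hK)$ (resp.\ $\Xi_p^2(\hK)$) is a detail the paper leaves implicit, but it is needed and you have it right.
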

\begin{proof}  Suppose that $p \in [1,\infty)$. Then, 
\Ref{trans1} yields, for any $v \in \Xi_{p}^i(\Kab)$, $i=1,2$,
\begin{align*}
  \frac{|v|_{1,p,\Kab}^p}{|v|_{0,p,\Kab}^p}
    = \frac{|u_{x}|_{0,p,\hK}^p 
                + \frac{\alpha^p}{\beta^p} |u_{y}|_{0,p,\hK}^p}
        {\alpha^p |u|_{0,p,\hK}^p}
   \ge \frac{|u_{x}|_{0,p,\hK}^p + |u_{y}|_{0,p,\hK}^p}
        {2^{p/2} |u|_{0,p,\hK}^p} \ge \frac{A_p^p}{2^{p/2}}.
\end{align*}
Taking \edit{the} infimum of the left-hand side with respect to
$v \in \Xi_p^i(\Kab)$, we obtain $A_{pi}(\Kab) \ge A_p/\sqrt{2}$.
The case $p = \infty$ \edit{is similarly proved}. \qed
\end{proof}

The proof of the following lemma is very similar to that of
Babu\v{s}ka-Aziz's \cite[Lemma~2.2]{BA}.  We present it here
for \edit{the} reader\edit{s'} convenience.
\begin{lemma} \label{L3}
Define the constant $B_p(\Kab)$ by
\[
  B_p(\Kab) := \inf_{v \in \T_p(\Kab)} 
  \frac{|v|_{2,p,\Kab}}{|v|_{1,p,\Kab}}.
\]
\edit{Then} $B_p(\Kab) \ge A_p/\sqrt{2}$.
\end{lemma}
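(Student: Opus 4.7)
The plan is to deduce Lemma~\ref{L3} by applying Lemma~\ref{L2} to each of the first partial derivatives $v_x$ and $v_y$ of $v \in \T_p(\Kab)$. The crucial observation is that the three point-value constraints defining $\T_p(\Kab)$ translate, via the fundamental theorem of calculus along the two edges lying on the coordinate axes, into the integral mean-value constraints defining $\Xi_p^1(\Kab)$ and $\Xi_p^2(\Kab)$.

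More precisely, for $v \in \T_p(\Kab)$ the edge from $(0,0)$ to $(\alpha,0)$ and the edge from $(0,0)$ to $(0,\beta)$ give
\[
 \int_0^\alpha v_x(x,0)\,\dd x = v(\alpha,0)-v(0,0)=0,\qquad
 \int_0^\beta v_y(0,y)\,\dd y = v(0,\beta)-v(0,0)=0,
\]
so that $v_x \in \Xi_p^1(\Kab)$ and $v_y \in \Xi_p^2(\Kab)$. Lemma~\ref{L2} then yields
\[
 |v_x|_{1,p,\Kab} \ge \frac{A_p}{\sqrt 2}\,|v_x|_{0,p,\Kab},\qquad
 |v_y|_{1,p,\Kab} \ge \frac{A_p}{\sqrt 2}\,|v_y|_{0,p,\Kab}.
\]

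For $p \in [1,\infty)$ I would combine these two estimates by raising to the $p$-th power and adding. Because
\[
 |v_x|_{1,p,\Kab}^p + |v_y|_{1,p,\Kab}^p
 = |v_{xx}|_{0,p,\Kab}^p + 2|v_{xy}|_{0,p,\Kab}^p + |v_{yy}|_{0,p,\Kab}^p
 = |v|_{2,p,\Kab}^p
\]
(using $v_{xy}=v_{yx}$) and $|v_x|_{0,p,\Kab}^p + |v_y|_{0,p,\Kab}^p = |v|_{1,p,\Kab}^p$, the bound $|v|_{2,p,\Kab} \ge (A_p/\sqrt 2)|v|_{1,p,\Kab}$ follows immediately, and the infimum over $\T_p(\Kab)$ gives the claim. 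For $p=\infty$ one argues in the same spirit but takes the maximum instead of summing: the identities
\[
 \max\!\bigl(|v_x|_{1,\infty,\Kab},|v_y|_{1,\infty,\Kab}\bigr) = |v|_{2,\infty,\Kab},\quad
 \max\!\bigl(|v_x|_{0,\infty,\Kab},|v_y|_{0,\infty,\Kab}\bigr) = |v|_{1,\infty,\Kab}
\]
reduce the estimate to the same application of Lemma~\ref{L2}.

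I do not anticipate a serious obstacle; the whole argument is a short reduction. The only delicate point is the traces of $v_x$ and $v_y$ on the two axis edges. For $v\in W^{2,p}(\Kab)$ with $p\in[1,\infty]$ these traces are well defined (and the boundary integrals above make sense) by the standard trace theory cited already in the proof of Lemma~\ref{L1}, so the fundamental-theorem-of-calculus manipulation is legitimate and the reasoning goes through uniformly in $p$.
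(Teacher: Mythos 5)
Your proof is correct, and its core idea is exactly the one in the paper: for $v\in\T_p$ the differences of the vertex values force $v_x$ and $v_y$ into the mean-zero classes $\Xi_p^1$ and $\Xi_p^2$, after which the first-order Poincar\'e-type inequality applied to each derivative, combined with $|v_x|_{1,p}^p+|v_y|_{1,p}^p=|v|_{2,p}^p$ and $|v_x|_{0,p}^p+|v_y|_{0,p}^p=|v|_{1,p}^p$ (maxima for $p=\infty$), gives the bound. The only difference is organizational: the paper pulls $v$ back to the reference triangle $\hK$ via $\Fab$, rewrites the ratio $|v|_{2,p,\Kab}^p/|v|_{1,p,\Kab}^p$ through the transformation identity \Ref{trans3}, and then applies Lemma~\ref{L1} on $\hK$ to $u_x$ and $u_y$, with the factor $2^{p/2}$ emerging from the bound $\alpha^p\le 2^{p/2}$; you instead stay on $\Kab$ and invoke Lemma~\ref{L2} directly, so the $\sqrt{2}$ is absorbed once and for all and no manipulation of the $\alpha^p/\beta^p$ weights is needed. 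Your route is shorter and yields the same constant $A_p/\sqrt{2}$; the paper's route has the minor advantage of making explicit how the anisotropy of $\Fab$ distributes over the second derivatives, a computation it reuses in Lemmas~\ref{L5} and later in Section~4. Your closing remark about traces is the right caveat, and it is handled at the same level of rigor as in the paper's own argument.
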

\begin{proof} Let $1 \le p < \infty$.  Take any $v \in \T_p(\Kab)$
and define $u:=v\circ\Fab \in \T_p(\hK)$.  It follows from \Ref{trans3}
and $\alpha/\beta \ge 1$ that
\begin{align*}
  \frac{|v|_{2,p,\Kab}^p}{|v|_{1,p,\Kab}^p}
 &   = \frac{|u_{xx}|_{0,p,\hK}^p
      + \frac{\alpha^p}{\beta^p} |u_{xy}|_{0,p,\hK}^p
      + \frac{\alpha^p}{\beta^p} 
    \left[|u_{xy}|_{0,p,\hK}^p +
   \frac{\alpha^p}{\beta^p} |u_{yy}|_{0,p,\hK}^p\right]}
        {\alpha^p \left(|u_{x}|_{0,p,\hK}^p
       + \frac{\alpha^p}{\beta^p} |u_{y}|_{0,p,\hK}^p\right)} \\
 &   \ge \frac{|u_{xx}|_{0,p,\hK}^p + |u_{xy}|_{0,p,\hK}^p +
   \frac{\alpha^p}{\beta^p} 
    \left[|u_{xy}|_{0,p,\hK}^p + |u_{yy}|_{0,p,\hK}^p\right]}
        {2^{p/2} \left(|u_{x}|_{0,p,\hK}^p 
    + \frac{\alpha^p}{\beta^p} |u_{y}|_{0,p,\hK}^p\right)}.
\end{align*}
Setting $w := u_{x}$, we notice $w \in \Xi_{p}^1(\hK)$, and
\begin{align*}
  |u_{xx}|_{0,p,\hK}^p + |u_{xy}|_{0,p,\hK}^p
   = |w|_{1,p,\hK}^p  \ge A_p^p |w|_{0,p,\hK}^p
   = A_p^p |u_{x}|_{0,p,\hK}^p
\end{align*}
by Lemma~\ref{L3}.  Similarly, setting
$z := u_{y}$, we have $z \in \Xi_{p}^2(\hK)$ and hence
\begin{align*}
  |u_{xy}|_{0,p,\hK}^p + |u_{yy}|_{0,p,\hK}^p
   = |z|_{1,p,\hK}^p  \ge A_p^p |z|_{0,p,\hK}^p
   = A_p^p |u_{y}|_{0,p,\hK}^p.
\end{align*}
Therefore,
\[
  \frac{|v|_{2,p,\Kab}^p}{|v|_{1,p,\Kab}^p}
  \ge   \frac{A_p^p
      \left(|u_x|_{0,p,\hK}^p
           + \frac{\alpha^p}{\beta^p} |u_y|_{0,p,\hK}^p\right)}
        {2^{p/2}\left(|u_x|_{0,p,\hK}^p
           + \frac{\alpha^p}{\beta^p} |u_y|_{0,p,\hK}^p\right)}
    = \frac{A_p^p}{2^{p/2}}.
\]
Taking \edit{the} infimum with respect to $v$, we conclude 
$B_p(\Kab) \ge {A_p}/{\sqrt{2}}$. 
The proof of the case $p=\infty$ is similar.  \qed
\end{proof}

The following lemma is an extension of 
\cite[Lemma~2.3]{BA} to any $p \in [1,\infty]$.
\edit{As the proof is relatively simple}, we omit \edit{the details}.

\begin{lemma} \label{L4} Define the constant $D_p$ by
\[
  D_p := \inf_{u \in \T_p(\hK)} 
  \frac{|u|_{2,p,\hK}}{|u|_{0,p,\hK}}.
\]
\edit{Then} $D_p > 0$.
\end{lemma}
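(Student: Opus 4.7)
The plan is to mimic Lemma~\ref{L1} and the proof of Babu\v{s}ka--Aziz's \cite[Lemma~2.3]{BA}, replacing the subspace $\mathcal{P}_0$ with $\mathcal{P}_1$ and the integral-average boundary condition with point evaluations at the three apexes of $\hK$.

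I would argue by contradiction. Suppose $D_p = 0$; then there is a sequence $\{u_k\} \subset \T_p(\hK)$ with $|u_k|_{0,p,\hK} = 1$ and $|u_k|_{2,p,\hK} \to 0$. By \cite[Theorem~3.1.1]{C}, there exists a constant $C(\hK,p)$ with
\[
   \inf_{q \in \mathcal{P}_1} \|v + q\|_{2,p,\hK} \le C(\hK,p)\, |v|_{2,p,\hK}, \qquad \forall v \in W^{2,p}(\hK),
\]
so choose $q_k \in \mathcal{P}_1$ with $\|u_k + q_k\|_{2,p,\hK} \le C(\hK,p)\,|u_k|_{2,p,\hK} + 1/k \to 0$.

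Next I would show $\{q_k\}$ is bounded in $\mathcal{P}_1$ (a three-dimensional normed space). Since $\|u_k + q_k\|_{2,p,\hK} \to 0$ and $|u_k|_{0,p,\hK}=1$, the norms $\|q_k\|_{0,p,\hK}$ are bounded, and on the finite-dimensional space $\mathcal{P}_1$ all norms are equivalent. Hence a subsequence $q_{k_i}$ converges to some $\bar{q} \in \mathcal{P}_1$, and $u_{k_i} \to -\bar{q}$ in $W^{2,p}(\hK)$.

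The key step, and the only subtlety, is to conclude $\bar{q}\equiv 0$ from the constraints at the three vertices. Because $W^{2,p}(\hK) \hookrightarrow C(\overline{\hK})$ for every $p \in [1,\infty]$, point evaluation is a bounded linear functional on $W^{2,p}(\hK)$; applying it at each apex and passing to the limit gives
\[
   \bar{q}(0,0) = \bar{q}(1,0) = \bar{q}(0,1) = 0.
\]
Since these three points are affinely independent, the only polynomial in $\mathcal{P}_1$ vanishing at all of them is the zero polynomial, so $\bar{q} \equiv 0$. Then $\|u_{k_i}\|_{2,p,\hK} \to 0$, contradicting $|u_{k_i}|_{0,p,\hK}=1$. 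The case $p = \infty$ is handled identically, noting as in Lemma~\ref{L1} that the imbedding $W^{2,\infty}(\hK) \subset C(\overline{\hK})$ is obvious. The main (minor) obstacle is merely the justification of the point-evaluation step at the critical exponent $p=1$, which is ensured by the imbedding $W^{2,1}(\hK) \subset C(\overline{\hK})$ mentioned in the introduction.
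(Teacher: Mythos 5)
Your argument is correct and is precisely the compactness-plus-Deny--Lions argument that the paper's Lemma~\ref{L1} uses (with $\mathcal{P}_0$ and the edge-integral constraint replaced by $\mathcal{P}_1$ and vertex evaluations), which is the intended route for this lemma, whose proof the paper omits as "relatively simple." All the steps check out, including the identification $\bar{q}\equiv 0$ via the imbedding $W^{2,p}(\hK)\subset C(\overline{\hK})$ for all $p\in[1,\infty]$ and the affine independence of the three apexes.
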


\vspace{0.3cm}
\begin{lemma} \label{L5} Define the constant $D_p(\Kab)$ by
\[
  D_p(\Kab) := \inf_{v \in \T_p(\Kab)} 
  \frac{|v|_{2,p,\Kab}}{|v|_{0,p,\Kab}}.
\]
\edit{Then} $D_p(\Kab) \ge D_p/2$.
\end{lemma}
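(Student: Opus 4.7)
The plan is to reduce Lemma~\ref{L5} to Lemma~\ref{L4} by pulling back via the OEC map $\Fab$, exactly as the preceding lemmas have done. Given $v\in\T_p(\Kab)$, set $u:=v\circ\Fab$. The zero conditions at the three vertices of $\Kab$ are transported to the corresponding vertices of $\hK$, so $u\in\T_p(\hK)$ and we are free to invoke Lemma~\ref{L4} on $u$.

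For $p\in[1,\infty)$ I would invoke the identity \Ref{trans2}. The numerator there is
\[
   \tfrac{\beta^p}{\alpha^p}|u_{xx}|_{0,p,\hK}^p
   +\tfrac{\alpha^p}{\beta^p}|u_{yy}|_{0,p,\hK}^p
   +2|u_{xy}|_{0,p,\hK}^p,
\]
and the key observation is that, under the normalization $\beta\le 1\le\alpha$, every one of the three coefficients $\beta^p/\alpha^p$, $\alpha^p/\beta^p$ and $2$ is bounded below by $\beta^p/\alpha^p$ (trivially for the first; because $\alpha/\beta\ge\beta/\alpha$ for the second; because $2\ge 1\ge\beta^p/\alpha^p$ for the third). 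Factoring this common lower bound out yields
\[
  \frac{|v|_{2,p,\Kab}^p}{|v|_{0,p,\Kab}^p}
  \ge \frac{\beta^p/\alpha^p}{\alpha^p\beta^p}
      \cdot\frac{|u|_{2,p,\hK}^p}{|u|_{0,p,\hK}^p}
  = \frac{1}{\alpha^{2p}}\cdot\frac{|u|_{2,p,\hK}^p}{|u|_{0,p,\hK}^p}.
\]
Applying Lemma~\ref{L4} to the last ratio and then using $\alpha^2=2-\beta^2\le 2$ (from the normalization $\alpha^2+\beta^2=2$) gives $|v|_{2,p,\Kab}/|v|_{0,p,\Kab}\ge D_p/\alpha^2\ge D_p/2$. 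Taking the infimum over $v\in\T_p(\Kab)$ finishes the finite-$p$ case.

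For $p=\infty$ I would run the identical argument using the $L^\infty$ version \Ref{trans3-inf} (or rather its $|v|_{2,\infty}/|v|_{0,\infty}$ analogue stated just above it): the max in the numerator dominates $(\beta/\alpha)|u|_{2,\infty,\hK}$ for the same coefficient-comparison reason, and division by $\alpha\beta|u|_{0,\infty,\hK}$ again produces the factor $1/\alpha^2\ge 1/2$. There is no real obstacle in this lemma; the only point requiring attention is verifying that $\beta^p/\alpha^p$ is indeed the minimum of the three coefficients in \Ref{trans2}, after which Lemma~\ref{L4} and the bound $\alpha^2\le 2$ do all the work.
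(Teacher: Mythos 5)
Your argument is correct and is essentially the paper's own proof: both start from \Ref{trans2}, bound the numerator below by pulling out the smallest coefficient (equivalently, the paper multiplies through by $\alpha^p/\beta^p$ and uses $\alpha/\beta\ge 1$), apply Lemma~\ref{L4}, and finish with $\alpha^2\le 2$. The $p=\infty$ case is handled the same way in both.
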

\begin{proof}
Suppose that $p \in [1,\infty)$. Then, 
\Ref{trans2} yields, for any $v \in \T_p(\Kab)$,
\begin{align*}
  \frac{|v|_{2,p,\Kab}^p}{|v|_{0,p,\Kab}^p}
    = \frac{|u_{xx}|_{0,p,\hK}^p 
                + 2 \frac{\alpha^p}{\beta^p} |u_{xy}|_{0,p,\hK}^p
                + \frac{\alpha^{2p}}{\beta^{2p}} |u_{yy}|_{0,p,\hK}^p}
        {\alpha^{2p} |u|_{0,p,\hK}^p} \\
     \ge \frac{|u_{xx}|_{0,p,\hK}^p + |u_{yy}|_{0,p,\hK}^p
                    + 2 |u_{xy}|_{0,p,\hK}^p}
        {2^{p} |u|_{0,p,\hK}^p} \ge \frac{D_p(\hK)^p}{2^{p}}.
\end{align*}
Taking \edit{the} infimum of the left-hand side with respect to
$v \in \T_p(\Kab)$, we obtain $D_p(\Kab) \ge D_p(K)/2$.
The case $p = \infty$ can be \edit{proved} in the same manner.  \qed
\end{proof}

\begin{remark}
According to \cite[pp40-41]{LK}, the approximated value of $D_2$ is
$1/0.167$.
\end{remark}

\section{The circumradius condition for right triangular elements}
\label{sect3}
Take  $R > 0$ and let the linear map $G_R$ be defined by
\[
   G_R:\R^2 \to \R^2, \qquad 
   G_R(\bfx) := R \bfx, \qquad \bfx \in \R^2.
\]
Two bounded domains $\Omega_1$, $\Omega_2 \subset \R^2$ are
called \textbf{similar} if there exists a map $\varphi$ which
consists of a rotation and a parallel translation,
such that $\Omega_2 = \varphi\circ G_R(\Omega_1)$.
If $\varphi$ is a parallel translation, $\varphi$ preserves \edit{the} Sobolev
norms of functions in $W^{m,p}(\Omega_1)$, ($m=0,1,2$,
$p \in [1,\infty]$).  Hence, we may ignore $\varphi$ \edit{in the following
discussion without loss of generality}.
Set $\Kab^{R} := G_{\sqrt{2}R}(\Kab)$.  The circumradius of  $\Kab^R$ is
$R$.  Take $v \in W^{m,p}(\Kab^R)$ and define
$\bar{v}:=v\circ G_{\sqrt{2}R} \in W^{m,p}(\Kab)$.  Then, we have
\begin{equation}
   |v|_{m,p,\Kab^R} = (\sqrt{2}R)^{2/p-m}|\bar{v}|_{m,p,\Kab}, \qquad
   m = 0,1,2.
  \label{similar}
\end{equation}
For the domain $\Kab^R$, we define $\T_p(\Kab^R)$ as before and 
\edit{find that}
\begin{align*}
  B_p(\Kab^R) := & \inf_{v \in \T_p(\Kab^R)} 
  \frac{|v|_{2,p,\Kab^R}}{|v|_{1,p,\Kab^R}}
  = \frac{B_p(\Kab)}{\sqrt{2}R}
  \ge \frac{A_p}{2R}, \\
  D_p(\Kab^R) := & \inf_{v \in \T_p(\Kab^R)} 
  \frac{|v|_{2,p,\Kab^R}}{|v|_{0,p,\Kab^R}}
  = \frac{D_p(\Kab)}{2R^2}
  \ge \frac{D_p}{4R^2}.
\end{align*}
Combining these estimates we obtain the following lemma.

\begin{lemma}\label{L6}
Let $K \subset \R^2$ be a right triangle whose circumradius is $R$.
Suppose that the two edges which contain the right angle are parallel
to x- and y-axis.  Then, we have the following estimates: 
\begin{gather}
  B_p(K) \ge \frac{A_p}{2R}, \qquad
  D_p(K)  \ge \frac{D_p}{4R^2}, \qquad 1 \le p \le \infty.
  \label{lemma6}
\end{gather}
\end{lemma}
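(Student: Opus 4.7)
The proof plan follows directly from the calculations laid out in the paragraph preceding the lemma; the task is essentially to package them cleanly and justify the reduction to $\Kab^R$.

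First I would identify $K$ with some $\Kab^R$. Let the two legs of $K$ (parallel to the axes) have lengths $a$ and $b$. Since the hypotenuse is a diameter of the circumcircle, $a^2 + b^2 = 4R^2$. Without loss of generality assume $a \ge b$; if instead $b > a$, reflect $K$ across the line $y = x$, which exchanges the roles of $u_x, u_y$ and of $u_{xx}, u_{yy}$ and so preserves $|\cdot|_{m,p}$ for $m=0,1,2$ and $p\in[1,\infty]$, leaving $B_p(K)$ and $D_p(K)$ unchanged. Next, set $\alpha := a/(\sqrt{2}R)$ and $\beta := b/(\sqrt{2}R)$, so that $\alpha^2 + \beta^2 = 2$ and $0 < \beta \le 1 \le \alpha < \sqrt{2}$. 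After a parallel translation placing the right-angle vertex at the origin (which preserves all Sobolev norms), $K$ coincides with $\Kab^R = G_{\sqrt{2}R}(\Kab)$.

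Next I would transfer the estimates from $\Kab$ to $\Kab^R$ by the scaling identity \Ref{similar}. For any $v \in \T_p(\Kab^R)$ with $\bar v := v \circ G_{\sqrt{2}R} \in \T_p(\Kab)$, the factor $(\sqrt{2}R)^{2/p-m}$ produces
\[
  \frac{|v|_{2,p,\Kab^R}}{|v|_{1,p,\Kab^R}} = \frac{1}{\sqrt{2}R}\,\frac{|\bar v|_{2,p,\Kab}}{|\bar v|_{1,p,\Kab}}, \qquad
  \frac{|v|_{2,p,\Kab^R}}{|v|_{0,p,\Kab^R}} = \frac{1}{2R^2}\,\frac{|\bar v|_{2,p,\Kab}}{|\bar v|_{0,p,\Kab}},
\]
so taking the infimum on each side yields $B_p(\Kab^R) = B_p(\Kab)/(\sqrt{2}R)$ and $D_p(\Kab^R) = D_p(\Kab)/(2R^2)$.

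Finally I would apply Lemmas \ref{L3} and \ref{L5} to conclude. From $B_p(\Kab) \ge A_p/\sqrt{2}$ we obtain $B_p(K) = B_p(\Kab^R) \ge A_p/(2R)$, and from $D_p(\Kab) \ge D_p/2$ we obtain $D_p(K) = D_p(\Kab^R) \ge D_p/(4R^2)$. Since the computation itself is purely mechanical, the only point that requires care is the identification step — in particular, verifying that the WLOG reduction to $\alpha \ge \beta$ is genuinely a symmetry of the quantities $B_p$ and $D_p$, and that the parameters $(\alpha,\beta)$ so obtained satisfy the normalization required by Lemmas \ref{L3} and \ref{L5}.
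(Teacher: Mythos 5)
Your proposal is correct and follows exactly the paper's route: the authors prove Lemma~\ref{L6} by the computation in the paragraph immediately preceding it, identifying the right triangle with $\Kab^R = G_{\sqrt{2}R}(\Kab)$, using the scaling identity \Ref{similar} to get $B_p(\Kab^R) = B_p(\Kab)/(\sqrt{2}R)$ and $D_p(\Kab^R)=D_p(\Kab)/(2R^2)$, and then invoking Lemmas~\ref{L3} and \ref{L5}. Your added care about the WLOG reflection ($a\ge b$, so that $0<\beta\le 1\le\alpha<\sqrt2$) and the normalization $\alpha^2+\beta^2=2$ is a detail the paper leaves implicit, and it checks out.
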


As is stated in the introduction, the linear interpolation operator
$\Ih v \in \mathcal{P}_1$ for $v \in W^{2,p}(K_R)$ is defined by
$(\Ih v)(\bfx_i) = v(\bfx_i)$, where ${\bf x}_i$, $i = 1,2,3$
are apexes of $K_R$.

\begin{theorem}\label{T1}
Let $K\subset\R^2$ be a right triangle whose circumradius is $R$.
Suppose that the two edges \edit{that} contain the right angle are parallel
to \edit{the} x- and y-ax\edit{e}s.  Then, the error of $\Ih$ on $K$ is estimated as
\begin{align*}
  \|v - \Ih v\|_{1,p,K} & \le C_p R |v|_{2,p,K}, \quad
   \forall v\in W^{2,p}(K), \quad 1 \le p \le \infty, \\
   C_p & := \begin{cases}
   \left(2^{p}A_p^{-p} + 4^pR^{p}
 D_p^{-p}\right)^{1/p}  & 1 \le p < \infty, \\
  \max\{2/A_\infty, 4R/D_\infty\} & p = \infty.
  \end{cases}
\end{align*}
\end{theorem}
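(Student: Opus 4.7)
The plan is to normalize $K$ so that Lemma~\ref{L6} applies directly to the interpolation residual $w := v - \Ih v$, and then to combine the two bounds furnished by that lemma. First I would observe that, by a translation together with, if necessary, reflections across the coordinate axes (all of which are isometries and preserve every Sobolev seminorm appearing in the statement), any right triangle with circumradius $R$ whose legs are parallel to the axes can be written as $\Kab^R$ for a unique pair $(\alpha,\beta)$ satisfying $\alpha^2+\beta^2=2$ and $0<\beta\le\alpha<\sqrt{2}$. Indeed, for such a triangle the hypotenuse has length $2R$, so the legs $a,b$ satisfy $a^2+b^2 = 4R^2$, and one sets $\alpha := a/(\sqrt{2}R)$ and $\beta := b/(\sqrt{2}R)$.

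Next, I would verify that $w \in \T_p(\Kab^R)$: it lies in $W^{2,p}(\Kab^R)$ because both $v$ and the affine function $\Ih v$ do, and it vanishes at the three apexes by the very definition of $\Ih$. Since $\Ih v$ is affine, its second derivatives vanish almost everywhere, so
\[
 |w|_{2,p,K} = |v|_{2,p,K}.
\]

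With $w\in\T_p(\Kab^R)$ in hand, Lemma~\ref{L6} yields
\[
 |w|_{1,p,K} \le \frac{2R}{A_p}\,|w|_{2,p,K} = \frac{2R}{A_p}\,|v|_{2,p,K},
 \qquad
 |w|_{0,p,K} \le \frac{4R^2}{D_p}\,|w|_{2,p,K} = \frac{4R^2}{D_p}\,|v|_{2,p,K}.
\]
Combining these with $\|w\|_{1,p,K}^p = |w|_{0,p,K}^p + |w|_{1,p,K}^p$ gives, for $1\le p<\infty$,
\[
 \|w\|_{1,p,K}^p
  \le \left(\frac{4R^2}{D_p}\right)^{p}|v|_{2,p,K}^p
   + \left(\frac{2R}{A_p}\right)^{p}|v|_{2,p,K}^p
  = R^p\!\left(\frac{2^p}{A_p^p} + \frac{4^p R^p}{D_p^p}\right)\!|v|_{2,p,K}^p,
\]
and taking $p$-th roots produces the stated constant $C_p$. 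For $p=\infty$ the same argument goes through verbatim, with $\max$ replacing the sum in the definition of $\|\cdot\|_{1,\infty,K}$.

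The main pitfall is the normalization step: one must be careful that the map identifying an arbitrary right triangle of circumradius $R$ with $\Kab^R$ uses only translations and axis-aligned reflections, since a general affine change of coordinates would alter the seminorms $|\cdot|_{m,p}$ and render Lemma~\ref{L6} inapplicable in its present scaled form. Once the canonical form is reached, all the analytic work has been packaged into Lemmas~\ref{L3} and~\ref{L5} (and propagated to $\Kab^R$ via \Ref{similar}), so the remainder is the routine combination above.
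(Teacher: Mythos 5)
Your proposal is correct and follows essentially the same route as the paper: the paper's proof is precisely the observation that $v-\Ih v\in\T_p(K)$ followed by an appeal to Lemma~\ref{L6}, and your write-up simply makes explicit the normalization to $\Kab^R$ and the combination of the two seminorm bounds into $\|\cdot\|_{1,p,K}$, details the paper leaves implicit. No gaps.
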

\begin{proof}
Since $v - Pv \in \T_p(K)$ for $v \in W^{2,p}(K)$, we may apply
Lemma~\ref{L6} and obtain
\[
   \|v - \Ih v\|_{1,p,K} \le C_p R |v - \Ih v|_{2,p,K}
    = C_p R |v|_{2,p,K}.
\]
\qed
\end{proof}

\section{The circumradius condition for general triangular elements}
In this section, we prove the circumradius condition for
general triangular elements. Let $\Kst$ be the right triangle
with apexes $N_1(-1,0)$, $N_2(1,0)$, $N_3(s,t)$,
 $t > 0$, $s^2 + t^2 = 1$.  The circumradius of $\Kst$ is $1$.
Define $\KK := F_{1,\eta}(\Kst)$ using the OEC transformation
$F_{1,\eta}$.  Note that any triangle $K$ is similar
to $\KK$ with appropriate $(s,t)$ and $\eta > 0$ (see Fig.~3).
We then try to write a lower bound of
\[
   \inf_{v \in \T_p(\KK)}
  \frac{|v|_{2,p,\KK}}{|v|_{1,p,\KK}}
\]
using $A_p$ and the circumradius of $\KK$.
We may assume without loss of generality that
the baseline $N_1N_2$ is the longest edge of $\KK$.
Under this assumption, $\eta$ is in the interval $(0, \sqrt{3}]$.

\begin{figure}[thb]
\begin{center}
  \includegraphics[width=10cm]{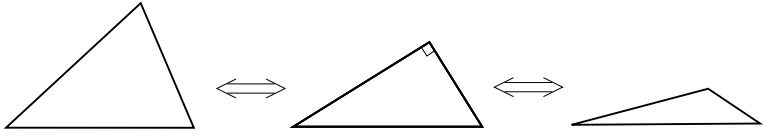} \\[0.5cm]
 \caption{$\Kst$ and $\KK$ with $\eta \in (0,\sqrt{3}]$.}
\end{center}
\end{figure}

\subsection{Preliminary}
Define the constants $a$, $b$, $X$, $Y$ by 
\[
  (a,b) := \frac1{\sqrt{2}}\left(\sqrt{1 + s}, \sqrt{1 - s}\right),
  \qquad  X := \sqrt{a^2 \eta^2 + b^2}, \quad
          Y := \sqrt{a^2 + b^2 \eta^2}.
\]
Note that $a^2 + b^2 = 1$, $2ab = t$ and
the vector $(a,b)$ is parallel to the edge $N_1N_3$.
We also have
\begin{align}
   X \le \sqrt{3a^2 + 3b^2} \le \sqrt{3}, \quad
   Y \le \sqrt{3}, \label{eta1} \\
   \frac{X}{\eta} = \sqrt{a^2 + \frac{b^2}{\eta^2}} \ge
 \frac{1}{\sqrt{3}}, \quad \frac{Y}{\eta} \ge \frac{1}{\sqrt{3}}.
     \label{eta2}
\end{align}
Note that the circumradius $R$ of $\KK$ is
\begin{align*}
 R = \frac{2\sqrt{(1-s)^2 + \eta^2t^2}\sqrt{(1+s)^2 + \eta^2t^2}}{4\eta t}
   = \frac{\sqrt{a^2 + b^2\eta^2}\sqrt{b^2 + a^2\eta^2}}{\eta}
   = \frac{XY}{\eta}.
\end{align*}
We also observe that the inequality
\begin{align*}
   X^2Y^2  = \eta^2 + a^2 b^2 (1 - \eta^2)^2 \ge a^2 b^2 (1 - \eta^2)^2
\end{align*}
yields
\begin{gather}
 XY - ab|1-\eta^2| = \frac{X^2Y^2-a^2b^2(1-\eta^2)^2}{XY+ab|1-\eta^2|}
  \ge \frac{\eta^2}{2XY}, \notag \\
  XY - \frac{\eta^2}{2XY} \ge ab|1-\eta^2|.
  \label{XY}
\end{gather}
We notice that the following inequalities hold
for any positive numbers $U$, $V$ and $p\ge1$:
\begin{gather}
    U^p + V^p \le 2^{\tau(p)} (U^2 + V^2)^{p/2}, \quad
   \tau(p) := \begin{cases}
         1-p/2, & 1 \le p \le 2 \\
         0,     & 2 \le p < \infty
     \end{cases},  \label{tau}\\
    (U^2 + V^2)^{p/2} \le 2^{\gamma(p)} (U^p + V^p), \quad
   \gamma(p) := \begin{cases}
         0, &  1 \le p \le 2 \\
         p/2 - 1,     & 2 \le p < \infty
     \end{cases}.
    \label{gamma}
\end{gather}

\subsection{A congruent transformation}
Define the congruent transformation $F:(x,y) \mapsto (z,w)$ by
\[
     \begin{pmatrix} z \\ w \end{pmatrix} =
  \begin{pmatrix} -a & -b \\ b & -a \end{pmatrix}
  \begin{pmatrix} x -s \\ y - t \end{pmatrix}.
\]
This transformation defines the $(z,w)$-coordinate on $\Kst$ and
maps the three apexes $(-1,0)$, $(1,0)$, $(s,t)$ of $\Kst$ 
to $(2a,0)$, $(0,2b)$, $(0,0)$. See Figure~4.
\begin{figure}[thb]
\begin{center}
  \psfrag{x}[][]{\small $x$}
  \psfrag{y}[][]{\small $y$}
  \psfrag{z}[][]{\small $z$}
  \psfrag{w}[][]{\small $w$}
  \psfrag{a}[][]{\small $-1$}
  \psfrag{b}[][]{\small $1$}
  \psfrag{c}[][]{\small $(s,t)$}
  \includegraphics[width=6.5cm]{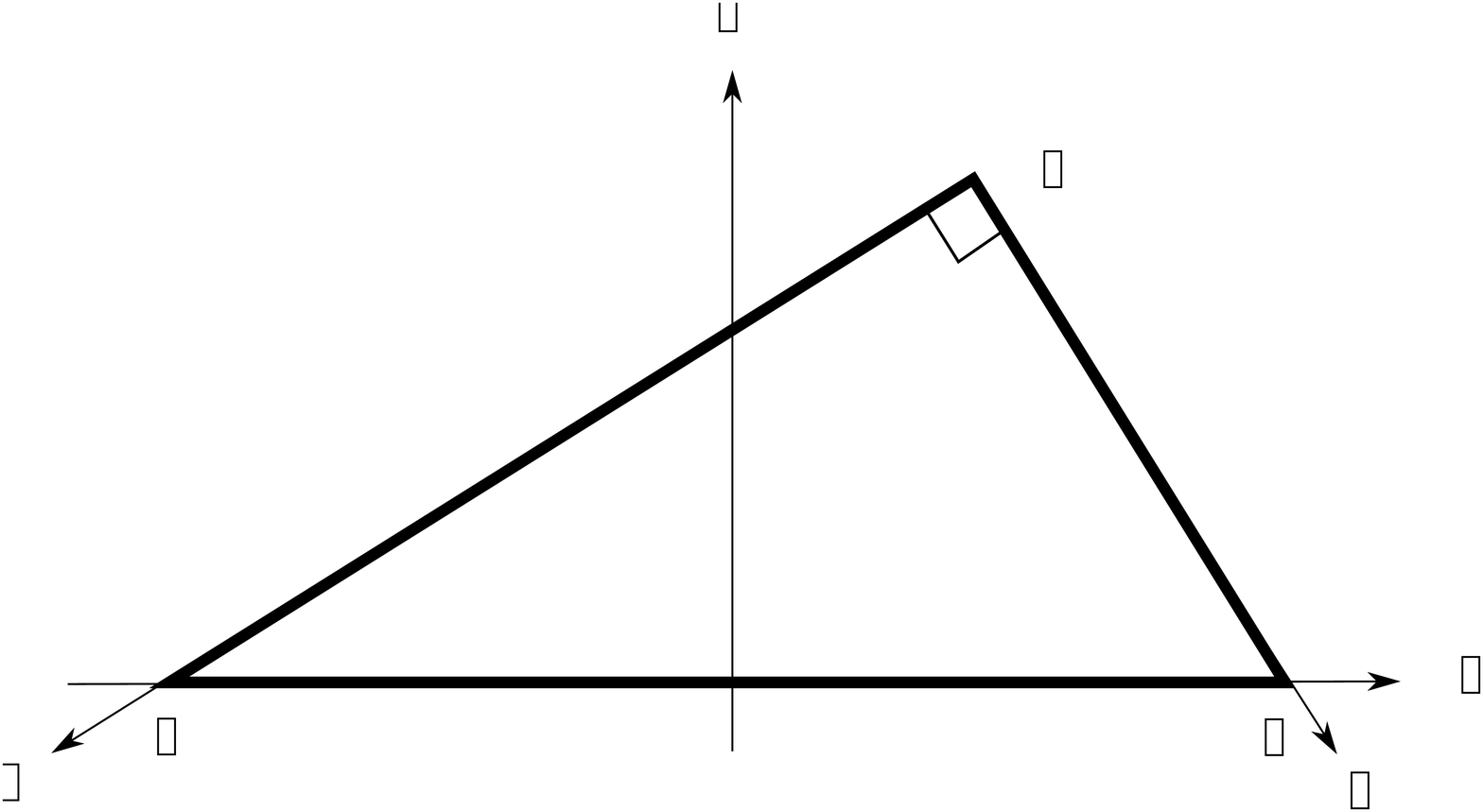} \\[0.5cm]
 Fig.~4: \textit{The two coordinates on $\Kst$.}
\end{center}
\end{figure}

For a sufficiently smooth function $f$ defined on $\Kst$,
we \edit{can} write upper and lower bounds of
$\eta^p|f_x|_{0,p,\Kst}^p + |f_y|_{0,p,\Kst}^p$
using $|f_z|_{0,p,\Kst}$, $|f_w|_{0,p,\Kst}$.
Let us suppose $1 \le p < \infty$ at first.  It follows from
\Ref{XY}, \Ref{tau}, \Ref{gamma} and
\[
    f_x = - a f_z + b f_w, \qquad f_y = -b f_z - a f_w,
\]
that
\begin{align}
  \eta^p |f_x|_{0,p,\Kst}^p & + |f_y|_{0,p,\Kst}^p
  = \int_{\Kst} \left(\eta^p |-a f_z + b f_w|^p
   + |b f_z + a f_w|^p\right) \dd \bfx \notag \\
 (\text{by \Ref{tau}}) \quad
  & \le 2^{\tau(p)} \int_{\Kst} \left(\eta^2 (-a f_z + b f_w)^2
   + (b f_z + a f_w)^2\right)^{p/2} \dd \bfx \notag \\
  & = 2^{\tau(p)} \int_{\Kst} \bigl(X^2|f_z|^2
   + Y^2|f_w|^2 
   + 2ab(1-\eta^2)f_zf_w\bigr)^{p/2} \dd \bfx \notag \\
 (\text{by \Ref{XY}}) \quad
  & \le 2^{\tau(p)} \int_{\Kst} \left(X|f_z|
   + Y|f_w|\right)^{p} \dd \bfx \notag \\
  & \le 2^{\tau(p)+p-1} \left(X^p|f_z|_{0,p,\Kst}^p
   + Y^p|f_w|_{0,p,\Kst}^p\right)\edit{,} \label{Pi-D}
\end{align}
and
{\allowdisplaybreaks 
\begin{align}
  \eta^p |f_x|_{0,p,\Kst}^p & + |f_y|_{0,p,\Kst}^p \notag \\
%
   (\text{by \Ref{gamma}})\; 
 &  \ge  2^{-\gamma(p)} \int_{\Kst} \left(X^2|f_z|^2
   + Y^2|f_w|^2 + 2ab(1-\eta^2)f_zf_w\right)^{p/2} \dd \bfx \notag \\
  & \ge 2^{-\gamma(p)} \int_{\Kst} \biggl(X^2|f_z|^2
   + Y^2|f_w|^2  \notag \\
  & \hspace{2cm} -ab|1-\eta^2|
   \left(\frac{X}{Y}|f_z|^2 + \frac{Y}{X}|f_w|^2\right)\biggr)^{p/2}
   \dd \bfx \notag\\
(\text{by \Ref{XY}})\;
   & \ge 2^{-\gamma(p)} \int_{\Kst} \biggl(X^2|f_z|^2
   + Y^2|f_w|^2 \notag \\
  & \hspace{2cm}- \left(XY - \frac{\eta^2}{2XY}\right)
   \left(\frac{X}{Y}|f_z|^2 + \frac{Y}{X}|f_w|^2\right)\biggr)^{p/2}
   \dd \bfx \notag \\
  & = 2^{-\gamma(p)-p/2}\eta^p \int_{\Kst} \left(\frac{1}{Y^2}|f_z|^2
   + \frac{1}{X^2}|f_w|^2\right)^{p/2} \dd \bfx \notag \\
 (\text{by \Ref{tau}})\;
  & \ge 2^{-\gamma(p)-\tau(p)-p/2}\eta^p
   \left(\frac{|f_z|_{0,p,\Kst}^p }{Y^p}
   + \frac{|f_w|_{0,p,\Kst}^p}{X^p}\right). \label{Pi-N}
\end{align}
}
If $p=\infty$, we obtain 
\begin{gather*}
  \max\{\eta |f_x|_{0,\infty,\Kst}, |f_y|_{0,\infty,\Kst}\} \le
 2 \max\left\{X|f_z|_{0,\infty,\Kst}, Y|f_w|_{0,\infty,\Kst}\right\}\edit{,}
\end{gather*}
\edit{in} exactly \edit{the} same manner.
(In the \edit{following}, we \edit{have} denoted
the $L^p(\Kst)$-norm by $|\cdot|_{p}$.)
To obtain the opposite inequality,
let $\bfx \in \Kst$ be any point.
Then, using the previous \edit{technique} we have
\begin{align*}
  \eta^2 |f_x(\bfx)|^2 + |f_y(\bfx)|^2
  & = X^2 |f_x(\bfx)|^2 + Y^2 |f_y(\bfx)|^2
    + 2ab (1 - \eta^2) f_z(\bfx)f_w(\bfx) \\
  & \ge  X^2 |f_x(\bfx)|^2 + Y^2 |f_y(\bfx)|^2 \\
  & \qquad  - ab |1 - \eta^2|\left(
    \frac{X}{Y} |f_z(\bfx)|^2 + \frac{Y}{X}|f_w(\bfx)|^2 \right) \\
  & \ge \frac{\eta^2}2 \left(
   \frac{|f_z(\bfx)|^2}{Y^2} + \frac{|f_w(\bfx)|^2}{X^2}
   \right).
\end{align*}
This inequality yields
\begin{align*}
  \max\{\eta |f_x|_{\infty}, |f_y|_{\infty}\} & \ge
   \max\{\eta |f_x(\bfx)|, |f_y(\bfx)|\} \\
 & \ge \frac1{\sqrt{2}}
  \left(\eta^2 |f_x(\bfx)|^2 + |f_y(\bfx)|^2\right)^{1/2} \\
 & \ge \frac{\eta}2 \left(
   \frac{|f_z(\bfx)|^2}{Y^2} + \frac{|f_w(\bfx)|^2}{X^2}
   \right)^{1/2} \ge \frac{\eta|f_z(\bfx)|}{2Y}, \\
  \max\{\eta |f_x|_{\infty}, |f_y|_{\infty}\} & \ge
   \frac{\eta|f_w(\bfx)|}{2X}.
\end{align*}
Since $\bfx \in \Kst$ is arbitrary, we obtain
\[
 \max\{\eta |f_x|_{\infty}, |f_y|_{\infty}\}  \ge
  \frac{\eta}2 \max\left\{
   \frac{|f_z|_\infty}{Y}, \frac{|f_w|_\infty}{X} \right\}.
\]

\subsection{A proof of the circumradius condition}
We \edit{can now} prove the circumradius condition.
 Let $1 \le p < \infty$.
For $v \in \T_p(\KK)$, define
$u := v \circ F_{1,\eta} \in \T_p(\Kst)$. Then\edit{,} it follows from
\Ref{trans3} that
\begin{align*}
  \frac{|v|_{2,p,\KK}^p}{|v|_{1,p,\KK}^p}
   = \frac{\Pi_N^p}{\Pi_D^p}
      := \frac{{\eta^p} |u_{xx}|_{p}^p
             + \frac{1}{\eta^p} |u_{yy}|_{p}^p
             + 2 |u_{xy}|_{p}^p} {\eta^p |u_x|_{p}^p + |u_y|_{p}^p}.
\end{align*}
By \Ref{Pi-D} the denominator $\Pi_D^p$ is estimated as
\begin{align*}
  \Pi_D^p  \le 2^{\tau(p)+p-1} \left(X^p|u_z|_{p}^p
   + Y^p|u_w|_{p}^p\right).
\end{align*}
By \Ref{Pi-N} we notice
\begin{gather*}
   \eta^p |u_{xx}|_{p}^p  + |u_{xy}|_{p}^p
   \ge c_1 \eta^p
   \left(\frac{|u_{xz}|_{p}^p }{Y^p}
   + \frac{|u_{xw}|_{p}^p}{X^p}\right), \\
   \eta^p |u_{xy}|_{p}^p  + |u_{yy}|_{p}^p
   \ge c_1 \eta^p  \left(\frac{|u_{yz}|_{p}^p}{Y^p}
   + \frac{|u_{yw}|_{p}^p}{X^p}\right),
\end{gather*}
where $c_1:=  2^{-\gamma(p)-\tau(p)-p/2}$.
Hence, the numerator $\Pi_N^p$ is estimated as
{\allowdisplaybreaks
\begin{align*}
  \Pi_N^p & = \eta^p |u_{xx}|_{p}^p  + |u_{xy}|_{p}^p
   + \eta^{-p}\left(
     \eta^p |u_{xy}|_{p}^p  + |u_{yy}|_{p}^p\right) \\
 & \ge c_1 \bigg\{
  \eta^p\left(\frac{|u_{xz}|_{p}^p }{Y^p}
   + \frac{|u_{xw}|_{p}^p}{X^p}\right)
  + \frac{|u_{yz}|_{p}^p }{Y^p}
   + \frac{|u_{yw}|_{p}^p}{X^p}\bigg\} \\
 & = c_1 \biggl\{
  \frac{1}{Y^p}\left(\eta^p |u_{xz}|_{p}^p
    + |u_{yz}|_{p}^p \right) + 
   \frac{1}{X^p}\left(\eta^p |u_{xw}|_{p}^p
    + |u_{yw}|_{p}^p \right) \biggr\} \\
 & \ge c_1^2\eta^p \biggl(
   \frac{|u_{zz}|_{p}^p}{Y^{2p}} +
   \frac{2|u_{zw}|_{p}^p}{X^pY^p} +
   \frac{|u_{ww}|_{p}^p}{X^{2p}} \biggr) \\
  (\text{by \Ref{eta1}})\;
 & \ge c_1^2\, 3^{-p/2}\,\eta^p \biggl(
  \frac1{Y^p}\left(|u_{zz}|_{p}^p + |u_{zw}|_{p}^p\right)
 + \frac1{X^p}\left(|u_{zw}|_{p}^p 
    + |u_{ww}|_{p}^p\right)\biggr) \\
  & \ge c_1^2\, 2^{-p}\, 3^{-p/2}\,\eta^p A_p^p\biggl(
    \frac1{Y^p}|u_{z}|_{p}^p + \frac1{X^p}|u_{w}|_{p}^p
    \biggr) \\
  & \ge c_1^2\, 2^{-p}\, 3^{-p/2}\, A_p^p \frac{\eta^p}{X^pY^p}
    \left({X^p}|u_{z}|_{p}^p + {Y^p}|u_{w}|_{p}^p\right).
\end{align*}
}
Here, we \edit{have} used the estimates
\[
  |u_{zz}|_{p}^p + |u_{zw}|_{p}^p \ge
   \frac{A_p^p}{2^p} |u_z|_{p}^p, \quad
  |u_{zw}|_{p}^p + |u_{ww}|_{p}^p \ge
   \frac{A_p^p}{2^p} |u_w|_{p}^p,
\]
since we may apply Lemma~\ref{L2} and \Ref{similar}
to $u_z$ and $u_w$.  Combining these estimates, we obtain
\begin{align*}
  \frac{\Pi_N^p}{\Pi_D^p}
  \ge \frac{A_p^p}{2^{3\tau(p)+ 2\gamma(p)+ 3p-1} 3^{p/2}}
  \frac{\eta^p}{X^pY^p}
  = \frac{A_p^p}{2^{3\tau(p)+ 2\gamma(p)+ 3p-1} 3^{p/2} R^p}.
\end{align*}

For the case $p=\infty$, 
$\frac{|v|_{2,\infty,\KK}}{|v|_{1,\infty,\KK}} = \frac{\Pi_N}{\Pi_D}$
is written \edit{as}
\begin{gather*}
  \Pi_N := \max\biggl\{
   \max\left\{\eta|u_{xx}|_{\infty},
    |u_{xy}|_{\infty}\right\},
   \textstyle{\frac{1}{\eta}}\max\left\{\eta|u_{xy}|_{\infty},
     |u_{yy}|_{\infty}\right\} \biggr\}, \\
  \Pi_D := \max\left\{\eta |u_x|_{\infty},
                       |u_y|_{\infty} \right\}.
\end{gather*}
Then, \edit{in} exactly \edit{the} same manner we obtain
\begin{align*}
  \Pi_D  \le 2 \max\left\{X|u_z|_{\infty}, Y|u_w|_{\infty}\right\}
\end{align*}
and
{\allowdisplaybreaks
\begin{align*}
 \Pi_N & \ge \frac12 \max\left\{\eta
    \max\left\{\frac{|u_{xz}|_\infty}{Y}, \frac{|u_{xw}|_\infty}{X}
   \right\},
   \max\left\{\frac{|u_{yz}|_\infty}{Y}, \frac{|u_{yw}|_\infty}{X}
   \right\}\right\} \\
  & = \frac12 \max\left\{ \frac1{Y}
    \max\left\{\eta|u_{xz}|_\infty, |u_{yz}|_\infty\right\},
    \frac1{X}
   \max\left\{\eta|u_{xw}|_\infty, |u_{yw}|_\infty\right\}\right\} \\
    & \ge \frac{\eta}{2^2} \max\left\{
   \frac1{Y} \max\left\{\frac{|u_{zz}|_\infty}{Y},
   \frac{|u_{zw}|_\infty}{X}\right\},
   \frac1{X}\max\left\{ \frac{|u_{zw}|_\infty}{Y},
   \frac{|u_{ww}|_\infty}{X}\right\}
   \right\} \\
    & \ge \frac{\eta}{2^2 3^{1/2}} \max\left\{
   \frac1{Y}\max\left\{|u_{zz}|_\infty,|u_{zw}|_\infty \right\},
   \frac1{X}\max\left\{|u_{zw}|_\infty,|u_{ww}|_\infty\right\}
   \right\} \\
    & \ge \frac{\eta A_\infty}{2^{3}3^{1/2}} \max\left\{
   \frac1{Y} |u_{z}|_{\infty}, \frac1{X} |u_{w}|_{\infty}
   \right\} 
%
 = \frac{ A_\infty}{2^{3}3^{1/2}R}
     \max\{X |u_{z}|_{\infty}, Y |u_{w}|_{\infty}\}.
\end{align*}
}
Combining these estimates\edit{,} we conclude \edit{that}
\begin{align*}
  \frac{\Pi_N}{\Pi_D} \ge \frac{A_\infty}{2^43^{1/2}R}.
\end{align*}

\begin{lemma}\label{L7}
Let $K$ be an arbitrary triangle whose circumradius is $R$.
Suppose that the longest edge of $K$ is parallel to \edit{the} $x$-axis
(or to \edit{the} $y$-axis) and of length $2$.  \edit{Then}
\[
 B_p(K) := \inf_{v \in \T_p(K)}\frac{|v|_{2,p,K}}{|v|_{1,p,K}}
  \ge \frac{A_p}{2^{\phi(p)}3^{1/2}R}, \quad
  \phi(p) := \begin{cases}
   3/2 + 2/p & 1 \le p \le 2 \\
   4 - 3/p   & 2 \le p \le \infty
   \end{cases}.
\]
\end{lemma}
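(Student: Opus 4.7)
The plan is to observe that the statement is an immediate corollary of the estimates derived in the preceding subsection, once one reduces to the canonical family $\KK$ and checks the arithmetic of the exponent $\phi(p)$.

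First I would reduce to the canonical case. Any triangle $K$ whose longest edge is parallel to the $x$-axis and has length $2$ is, up to a parallel translation, of the form $\KK = F_{1,\eta}(\Kst)$ for some choice of $s^2+t^2=1$ with $t>0$ and some $\eta \in (0,\sqrt{3}]$ (the bound on $\eta$ comes exactly from the assumption that the baseline is the longest edge). Because parallel translations preserve both $W^{2,p}$-seminorms and the condition defining $\T_p$, it suffices to bound $B_p(\KK)$ from below.

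Second, I would quote the chain of inequalities already assembled in Section~4.3. For $1 \le p < \infty$ and any $v \in \T_p(\KK)$, setting $u := v \circ F_{1,\eta}$ we have shown
\[
  \frac{|v|_{2,p,\KK}^p}{|v|_{1,p,\KK}^p} = \frac{\Pi_N^p}{\Pi_D^p}
  \ge \frac{A_p^p}{2^{3\tau(p)+2\gamma(p)+3p-1}\, 3^{p/2}\, R^p},
\]
and for $p=\infty$,
\[
  \frac{\Pi_N}{\Pi_D} \ge \frac{A_\infty}{2^{4}\,3^{1/2}\,R}.
\]
Taking the infimum over $v \in \T_p(\KK)$ and extracting $p$-th roots yields the lower bound stated in the lemma, with exponent of $2$ equal to $(3\tau(p)+2\gamma(p)+3p-1)/p$ when $p<\infty$ and equal to $4$ when $p=\infty$.

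Third, the only remaining task is the arithmetic check that this exponent coincides with the $\phi(p)$ in the statement. For $1 \le p \le 2$ we have $\tau(p)=1-p/2$ and $\gamma(p)=0$, hence $(3\tau(p)+2\gamma(p)+3p-1)/p = 3/p-3/2+3-1/p = 3/2+2/p$. For $2 \le p < \infty$ we have $\tau(p)=0$ and $\gamma(p)=p/2-1$, hence the exponent equals $1-2/p+3-1/p = 4-3/p$. The value $4$ at $p=\infty$ is the natural limit of this expression and matches the $p=\infty$ case computed separately. This is the only place any real computation occurs, and it is purely routine; there is no genuine obstacle, since the main analytic work (the $F$-coordinate estimates \Ref{Pi-D}, \Ref{Pi-N} and the two applications of Lemma~\ref{L2} to $u_z$ and $u_w$) has already been carried out. \qed
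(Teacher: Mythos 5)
Your proposal is correct and follows exactly the paper's own route: the lemma is stated as the summary of the computation in Section 4.3, where the bounds $\Pi_N^p/\Pi_D^p \ge A_p^p/(2^{3\tau(p)+2\gamma(p)+3p-1}3^{p/2}R^p)$ for $p<\infty$ and $\Pi_N/\Pi_D \ge A_\infty/(2^4 3^{1/2}R)$ for $p=\infty$ are derived, and your arithmetic identifying $(3\tau(p)+2\gamma(p)+3p-1)/p$ with $\phi(p)$ in both regimes is right. The reduction to the canonical family $\KK$ with $\eta\in(0,\sqrt{3}]$ and the translation/reflection argument are likewise exactly what the paper does implicitly.
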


Next, we try to estimate
$\inf_{v \in \T_p(\KK)}\frac{|v|_{2,p,\KK}}{|v|_{0,p,\KK}}$.
Let $1 \le p < \infty$.  Take any function $v \in \T_p(\KK)$,
and define $u := v \circ F_{1,\eta} \in \T_p(\Kst)$.
From \Ref{trans2}, we need to estimate
\begin{align*}
  \frac{|v|_{2,p,\KK}^p}{|v|_{0,p,\KK}^p}
   = \frac{{\eta^p} |u_{xx}|_{p}^p
             + \frac{1}{\eta^p} |u_{yy}|_{p}^p
             + 2 |u_{xy}|_{p}^p}
        {\eta^p |u|_{p}^p}
    = \frac{\Pi_N^p}{\eta^p |u|_{p}^p}\edit{.}
\end{align*}
Using \Ref{eta2}, we see that
\begin{align*}
   \Pi_N^p & \ge c_1^2 \eta^p \biggl(
   \frac{|u_{zz}|_{p}^p}{Y^{2p}} +
   \frac{2|u_{zw}|_{p}^p}{X^pY^p} +
   \frac{|u_{ww}|_{p}^p}{X^{2p}} \biggr) \\
   & = c_1^2\frac{\eta^{3p}}{X^{2p}Y^{2p}} \biggl(
    \frac{X^{2p}}{\eta^{2p}}|u_{zz}|_{p}^p +
   2\frac{X^pY^p}{\eta^{2p}}|u_{zw}|_{p}^p +
   \frac{Y^{2p}}{\eta^{2p}}|u_{ww}|_{p}^p \biggr) \\
   & \ge c_1^2\frac{\eta^{p}}{3^pR^{2p}} \left(
    |u_{zz}|_{p}^p + 2|u_{zw}|_{p}^p + |u_{ww}|_{p}^p \right).
\end{align*}
Therefore, applying Lemma~\ref{L5} we obtain
\begin{align*}
  \frac{|v|_{2,p,\KK}^p}{|v|_{0,p,\KK}^p}
     = \frac{\Pi_N^p}{\eta^p |u|_{p}^p}
  & \ge \frac{c_1^2}{3^pR^{2p}} 
    \frac{ \left(|u_{zz}|_{p}^p + 2|u_{zw}|_{p}^p +
   |u_{ww}|_{p}^p \right)}{|u|_{p}^p} \\
  & \ge \frac{c_1^2}{3^pR^{2p}}
    \frac{D_p^p}{2^{2p}}
   = \frac{D_p^p}{2^{2\gamma(p)+2\tau(p)+3p}3^pR^{2p}}.
\end{align*}
The proof of the case $p=\infty$ is very similar.

\begin{lemma}\label{L8}
Let $K$ be an arbitrary triangle whose circumradius is $R$.
Suppose that the longest edge of $K$ is parallel to \edit{the} $x$-axis
(or to \edit{the} $y$-axis) and of length $2$. Then
\[
 D_p(K) := \inf_{v \in \T_p(K)}\frac{|v|_{2,p,K}}{|v|_{0,p,K}}
  \ge \frac{D_p}{2^{\mu(p)}3R^2}, \quad
  \mu(p) := \begin{cases}
   2 + 2/p & 1 \le p \le 2 \\
   4 - 2/p   & 2 \le p \le \infty
   \end{cases}.
\]
\end{lemma}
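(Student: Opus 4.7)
The plan is to adapt the reduction already used for Lemma~\ref{L7}. Given $v\in\T_p(\KK)$, I set $u:=v\circ F_{1,\eta}\in\T_p(\Kst)$, so that by \Ref{trans2} the target quantity rewrites as
\[
\frac{|v|_{2,p,\KK}^p}{|v|_{0,p,\KK}^p}=\frac{\Pi_N^p}{\eta^p|u|_{p}^p},
\]
with exactly the same $\Pi_N^p$ that appeared in the previous proof. The task is then to bound $\Pi_N^p$ from below in terms of $|u|_p^p$, rather than in terms of $|u_x|_p^p$ and $|u_y|_p^p$ as was needed for Lemma~\ref{L7}.

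Reusing the chain of inequalities already assembled in the preceding subsection (with $c_1:=2^{-\gamma(p)-\tau(p)-p/2}$), I have
\[
\Pi_N^p\ge c_1^2\,\eta^p\left(\frac{|u_{zz}|_{p}^p}{Y^{2p}}+\frac{2\,|u_{zw}|_{p}^p}{X^p Y^p}+\frac{|u_{ww}|_{p}^p}{X^{2p}}\right).
\]
Factoring out $\eta^{3p}/(XY)^{2p}=\eta^p/R^{2p}$ and using the bounds $X/\eta,Y/\eta\ge1/\sqrt{3}$ from \Ref{eta2}---so that each of $(X/\eta)^{2p}$, $(XY/\eta^2)^p$, $(Y/\eta)^{2p}$ is at least $3^{-p}$---collapses the three terms onto a common denominator $3^p$, yielding
\[
\Pi_N^p\ge\frac{c_1^2\,\eta^p}{3^p R^{2p}}\bigl(|u_{zz}|_{p}^p+2\,|u_{zw}|_{p}^p+|u_{ww}|_{p}^p\bigr).
\]

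For the closing step I would apply Lemma~\ref{L5} in the $(z,w)$-coordinate frame, in which $\Kst$ becomes the right triangle with legs $2a,2b$ along the axes. Since $a^2+b^2=1$, this triangle is $G_{\sqrt{2}}(K_{\sqrt{2}a,\sqrt{2}b})$ with $(\sqrt{2}a)^2+(\sqrt{2}b)^2=2$, so Lemma~\ref{L5} combined with the similarity formula \Ref{similar} delivers $|u_{zz}|_{p}^p+2\,|u_{zw}|_{p}^p+|u_{ww}|_{p}^p\ge(D_p/2^{2})^{p}|u|_p^p$; the extra factor $2^p$ beyond Lemma~\ref{L5} itself comes from the $\sqrt{2}$-rescaling back to a unit-circumradius $\Kab$-configuration. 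Dividing by $\eta^p|u|_p^p$, collecting the powers of $2$, and taking $p$-th roots produces the exponent $\mu(p)=(2\gamma(p)+2\tau(p)+3p)/p$, which simplifies to $2/p+2$ on $[1,2]$ and to $4-2/p$ on $[2,\infty)$. The $p=\infty$ case runs identically on the sup-norm analogues of \Ref{trans2} and of the corresponding $\Pi_N$ bound. The main bookkeeping hazard I anticipate is in the rescaling in the $(z,w)$-frame: one must check that the congruent change of coordinates preserves the apex-vanishing conditions defining $\T_p$ and apply \Ref{similar} consistently, so that the total prefactor really is $D_p^p/2^{2p}$ and no hidden circumradius dependence slips in.
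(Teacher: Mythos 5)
Your proposal is correct and follows essentially the same route as the paper: rewrite the ratio via \Ref{trans2}, reuse the lower bound $\Pi_N^p\ge c_1^2\eta^p\bigl(|u_{zz}|_p^p/Y^{2p}+2|u_{zw}|_p^p/(X^pY^p)+|u_{ww}|_p^p/X^{2p}\bigr)$, factor out $\eta^p/R^{2p}$ and absorb the coefficients into $3^{-p}$ via \Ref{eta2}, then apply Lemma~\ref{L5} together with \Ref{similar} in the $(z,w)$-frame to get the factor $D_p^p/2^{2p}$, which yields exactly $\mu(p)=(2\gamma(p)+2\tau(p)+3p)/p$. The bookkeeping you flag (the congruence preserving $\T_p$ and the $\sqrt{2}$-rescaling contributing the extra $2^p$) checks out and matches the paper's computation.
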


When we apply Lemmas~\ref{L7} and \ref{L8} to an arbitrary triangle
$K$, we have to notice an orthogonal matrix (or a rotation) 
$\varphi$ may change the Sobolev norms.
\edit{
More precisely, the Sobolev norms $|v\circ\varphi|_{m,p,K}$,
$m = 0,1,2$ of $v \in W^{m,p}(\varphi(K))$ are different from
$|v|_{m,p,\varphi(K)}$ in general, and are estimated as
\begin{gather*}
   2^{-m(\tau(p)+\gamma(p))/p}|v|_{m,p,\varphi(K)} \le
   |v\circ\varphi|_{m,p,K} \le 
  2^{m(\tau(p)+\gamma(p))/p}|v|_{m,p,\varphi(K)},
  \quad p \in [1,\infty),
\end{gather*}
and
\vspace{-10pt}
\begin{gather*}
     2^{-m/2}|v|_{m,\infty,\varphi(K)} \le
   |v\circ\varphi|_{m,\infty,K} \le 
  2^{m/2}|v|_{m,\infty,\varphi(K)}.
\end{gather*}
}
Gathering Lemma~\ref{L7}, \ref{L8} and \Ref{similar}, 
we have obtained the following theorems:

\begin{theorem}
Let $K$ be an arbitrary triangle whose circurmradius is $R$.
Then for any $w \in \T_p(K)$, $1 \le p \le \infty$,
there exist constants $E_1(p)$ and $E_2(p)$ \edit{that depend} only on $p$
such that the following \edit{is true}:
\begin{gather*}
  \|w\|_{1,p,K} \le M_p R |w|_{2,p,K}, \qquad
    M_p := (E_1(p)^p + E_2(p)^pR^p)^{1/p}.
\end{gather*}
\end{theorem}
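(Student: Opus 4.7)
The plan is to reduce an arbitrary triangle $K$ to a canonical position where Lemmas~\ref{L7} and \ref{L8} apply directly, and then combine the two resulting estimates on $|w|_{1,p,K}$ and $|w|_{0,p,K}$ to control the full norm $\|w\|_{1,p,K}$.

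First I would apply a rotation $\varphi$ so that the longest edge of $\varphi(K)$ is parallel to the $x$-axis; the circumradius is preserved, and writing $w = v\circ\varphi$ with $v \in \T_p(\varphi(K))$, the inequalities displayed immediately before the theorem show that $|w|_{m,p,K}$ and $|v|_{m,p,\varphi(K)}$ are equivalent up to a constant depending only on $p$ and $m$. Next I would scale: let $\ell \le 2R$ be the length of the longest edge of $\varphi(K)$, set $\lambda := 2/\ell$, and put $K' := G_\lambda(\varphi(K))$, so that $K'$ has longest edge~$2$ (still parallel to the $x$-axis) and circumradius $R' = \lambda R$. Writing $v = \tilde v \circ G_\lambda$ with $\tilde v \in \T_p(K')$, the scaling identity \Ref{similar} (with $\lambda$ in place of $\sqrt{2}R$) yields $|v|_{m,p,\varphi(K)} = \lambda^{m-2/p}|\tilde v|_{m,p,K'}$.

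Applying Lemma~\ref{L7} to $K'$ gives $|\tilde v|_{1,p,K'} \le c_1 R'\,|\tilde v|_{2,p,K'}$, and Lemma~\ref{L8} gives $|\tilde v|_{0,p,K'} \le c_2 (R')^2\,|\tilde v|_{2,p,K'}$, for constants $c_1, c_2$ depending only on $p$. Substituting via the scaling identity, the powers of $\lambda$ balance on both sides and combine with $R' = \lambda R$ so that
\[
   |v|_{1,p,\varphi(K)} \le c_1 R\,|v|_{2,p,\varphi(K)}, \qquad
   |v|_{0,p,\varphi(K)} \le c_2 R^2\,|v|_{2,p,\varphi(K)}.
\]
Undoing the rotation multiplies each side by further $p$-dependent constants, so that for some $E_1(p), E_2(p) > 0$ depending only on $p$,
\[
  |w|_{1,p,K} \le E_1(p)\,R\,|w|_{2,p,K}, \qquad
  |w|_{0,p,K} \le E_2(p)\,R^2\,|w|_{2,p,K}.
\]

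Finally, for $1 \le p < \infty$ I would combine these two bounds via the definition of the norm:
\[
  \|w\|_{1,p,K}^p = |w|_{0,p,K}^p + |w|_{1,p,K}^p
  \le R^p \bigl(E_1(p)^p + E_2(p)^p R^p\bigr)\,|w|_{2,p,K}^p,
\]
which gives exactly $\|w\|_{1,p,K} \le M_p R\,|w|_{2,p,K}$ with $M_p$ as stated. The case $p = \infty$ is identical using $\|w\|_{1,\infty,K} = \max\{|w|_{0,\infty,K},|w|_{1,\infty,K}\}$, giving $M_\infty = \max\{E_1(\infty), E_2(\infty) R\}$. The only delicate point is verifying the cancellation of $\lambda$ in the scaling step; once that is checked, the rest is a routine assembly of Lemmas~\ref{L7} and \ref{L8}.
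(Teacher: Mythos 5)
Your proposal is correct and follows essentially the same route the paper intends: the paper gives no written proof beyond ``Gathering Lemma~\ref{L7}, \ref{L8} and \Ref{similar}'', and your argument is precisely that assembly — rotate to put the longest edge along the $x$-axis (using the displayed norm-equivalence for rotations), rescale the longest edge to length $2$ so the two lemmas apply, check that the powers of the scaling factor cancel, and add the resulting bounds $|w|_{1,p,K}\le E_1(p)R|w|_{2,p,K}$ and $|w|_{0,p,K}\le E_2(p)R^2|w|_{2,p,K}$ to get the stated $M_p$. The scaling cancellation you flag does work out, since $\lambda^{1-2/p}\cdot\lambda\cdot\lambda^{2/p-2}=1$ and $\lambda^{-2/p}\cdot\lambda^{2}\cdot\lambda^{2/p-2}=1$.
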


\begin{theorem}
Let $K$ be an arbitrary triangle whose circurmradius is $R$.
Then the following estimate holds$:$
\begin{align*}
  \|w - {I}_h w\|_{1,p,K} \le M_p R |w|_{2,p,K}, \qquad
   \forall w \in W^{2,p}(K), \quad 1 \le p \le \infty.
\end{align*}
\end{theorem}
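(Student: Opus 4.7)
The plan is to reduce the interpolation error estimate to the preceding theorem, which bounds $\|w\|_{1,p,K}$ by $M_p R |w|_{2,p,K}$ for any $w \in \T_p(K)$ and arbitrary triangle $K$ of circumradius $R$. The two ingredients needed are (i) the fact that the residual $w - \Ih w$ lies in the class $\T_p(K)$ of functions vanishing at the three vertices of $K$, and (ii) the fact that $\Ih w$, being affine, contributes nothing to the second-order seminorm.

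First, I would argue that $w - \Ih w \in \T_p(K)$. By definition of the $\mathcal{P}_1$ interpolant, $(\Ih w)(\bfx_i) = w(\bfx_i)$ at each vertex $\bfx_i$, $i=1,2,3$, so $w - \Ih w$ vanishes at all three vertices of $K$. Thus $w - \Ih w$ belongs to the analogue of $\T_p$ defined on the general triangle $K$, and the preceding theorem applies to it.

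Second, I would apply the preceding theorem to $w - \Ih w$ to obtain
\[
    \|w - \Ih w\|_{1,p,K} \le M_p R\, |w - \Ih w|_{2,p,K}.
\]
Since $\Ih w \in \mathcal{P}_1$, all second-order partial derivatives of $\Ih w$ vanish identically on $K$, so $|w - \Ih w|_{2,p,K} = |w|_{2,p,K}$. Substituting this equality into the previous display yields the desired bound for all $p \in [1,\infty]$.

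There is essentially no obstacle here; the theorem is a one-line consequence of the preceding one once the membership $w - \Ih w \in \T_p(K)$ is noted, mirroring exactly the proof strategy used for Theorem~\ref{T1} in Section~\ref{sect3}. The only point that deserves a moment's verification is that the preceding theorem is genuinely stated for an \emph{arbitrary} triangle of circumradius $R$ (without the axis-alignment hypothesis of Lemmas~\ref{L7} and~\ref{L8}); this was already arranged by absorbing the rotation correction into the constants $E_1(p)$, $E_2(p)$ via the orthogonal-matrix estimates recorded just before the theorem statement, so no further work is required.
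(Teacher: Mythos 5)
Your proposal is correct and follows exactly the paper's (implicit) argument: just as in the proof of Theorem~\ref{T1}, one notes $w - \Ih w \in \T_p(K)$, applies the preceding theorem to it, and uses $|w - \Ih w|_{2,p,K} = |w|_{2,p,K}$ since $\Ih w \in \mathcal{P}_1$ has vanishing second derivatives. No further comment is needed.
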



\section{Concluding remarks}
In this paper, \edit{we have proved} the circumradius condition for
triangular elements in $\R^2$ \edit{using a} Babu\v{s}ka-Aziz type
technique\edit{,} without validated numerical computation.  
Since the error analysis of $\Ih$ is very important, generalizations
of Kobayashi's formula and/or the circumradius condition are 
required.  Some of the \edit{unanswered}
questions \edit{on which to focus subsequent research are:}
\begin{itemize}
\item In nonlinear finite element error analysis, the 
\textbf{inverse inequality} plays an important role
\cite[pp139--143]{C}. It is interesting
to consider whether or not we are able to obtain a condition similar
to the circumradius condition for the inverse inequality.
\item \edit{Does} the circumradius condition hold on three dimensional
tetrahedrons\edit{?} 
Unfortunately, one of the authors, Kobayashi, has already found a
counter example which shows that the circumradius condition does not
hold on tetrahedrons. \edit{A} very interesting problem is to find an
essential condition\edit{, similar to the circumradius condition,
which can be used} for error estimate on tetrahedrons.
\item In \cite{HKK}, Hannukainen-Korotov-K\v{r}\'{i}\v{z}ek show that
\textit{the maximum angle condition is not necessary for convergence of
the finite element method} by showing simple examples.
In their examples, \edit{the}
circumradii of triangles are very close to constant\edit{s}
while $h \to 0$.  Thus, the circumradius condition cannot explain the
convergence of the finite element solutions in \cite{HKK}.
Therefore, the question \edit{remains}
\textit{``what is the essential \edit{triangulation} condition
for the convergence of finite element solutions\edit{?}''}.
\edit{This is a very important question, which we wish to answer.}
\end{itemize}

\vspace{-0.25cm}

\begin{acknowledgements}
The first author is supported by Inamori Foundation and
 JSPS Grant-in-Aid for Young Scientists (B) 22740059.
The second author is partially supported by JSPS
Gran\edit{t}-in-Aid for
Scientific Research (C) 22440139 and Gran\edit{t}-in-Aid for
Scientific Research (B) 23340023.
The authors thank the anonymous referee(s) for many
valuable comments and suggestions.
\end{acknowledgements}

\vspace{-0.25cm}


\begin{thebibliography}{}
%
%
\bibitem{BA}
{\sc Babu\v{s}ka, I., Aziz,A.K.:} 
On the angle condition in the finite element method,
SIAM J.\ Numer.\ Anal.\ \textbf{13}, 214--226 (1976)

\bibitem{C}
{\sc Ciarlet, P.G.:}
\textsl{The Finite Element Methods for Elliptic Problems,}
North Holland, 1978, reprint by SIAM 2008.

\bibitem{HKK}
{\sc Hannukainen, A.,Korotov, S. K\v{r}\'{i}\v{z}ek, M.:}
The maximum angle condition is not necessary for convergence of
the finite element method, 
Numer.\ Math., \textbf{120}, 79--88 (2011)

\bibitem{J}
{\sc Jamet, P.:}
 Estimations d'erreur pour des elements finis droits
presque degeneres, R.A.I.R.O.\ Anal.\ Numer., \textbf{10}, 43--61
(1976)

\bibitem{K1}
{\sc Kobayashi, K.:}
On the interpolation constants over triangular elements
(in Japanese),  RIMS Kokyuroku, \textbf{1733}, 58-77 (2011)

\bibitem{K2}
{\sc Kobayashi, K.:}
Remarkable upper bounds for the interpolation error
constants on the triangles, \textsl{in preparation}

\bibitem{KJF}
{\sc Kufner, A., John, O., Fu\v{c}\'{i}k, S.:}
\textsl{Function Spaces,} Noordhoff International Publishing, 1977

\bibitem{LK}
{\sc Liu, X., Kikuchi, F.:}
Analysis and estimation of error constants for $P_0$ and $P_1$
interpolations over triangular finite elements,
J.\ Math.\ Sci.\ Univ.\ Tokyo, \textbf{17}, 27--78 (2010)

\bibitem{Z}
{\sc Zl\'amal, M.:}
On the finite element method,
Numer.\ Math.\, \textbf{12}, 394--409 (1968)
\end{thebibliography}


\end{document}